\theoremstyle{plain}
\newtheorem{theorem}{Theorem}
\numberwithin{theorem}{section}
\newtheorem{lemma}{Lemma}
\numberwithin{lemma}{section}
\numberwithin{equation}{section}
\begin{document}
\title[Weighted estimates of the Cayley transform method...]{Weighted estimates of the Cayley transform method
for boundary value problems \\in a Banach space}
\author{V. L. Makarov}
\address{Institute of Mathematics of the National Academy of Sciences of Ukraine,
3 Tereshchenkivska Str., 01024 Kyiv,  Ukraine}
\email{makarovimath@gmail.com}
\author{N. V. Mayko}
\address{Taras Shevchenko National University of Kyiv\\
 64/13 Volodymyrska Str., 01601 Kyiv, Ukraine}
\email{mayko@knu.ua}


\begin{abstract}
We consider the boundary value problems (BVPs) for linear
second-order  ODEs
with a strongly positive operator coefficient in a Banach space.
The solutions are given in the form of the infinite series
by means of the Cayley transform of the operator,
the Meixner type polynomials of the independent variable,
the operator Green function
and the Fourier series representation for the right-hand side of the equation.
The approximate solution of each problem is a partial sum
of $N$ (or expressed through $N$) summands.
We prove the weighted error estimates depending
on the discretization parameter $N$,
the distance of the independent variable
to the boundary points of the interval
and some smoothness properties of the input data.\vspace{0.5cm}

\noindent\textsc{Keywords.}
{Boundary value problem (BVP), Banach space,
Cayley transform, exponentially convergent algorithm,
algorithm without saturation of accuracy, weighted estimate.}\vspace{0.5cm}

\noindent\textsc{MSC 2010: 65J10, 65L10,  65T40,  65Y20}
\end{abstract}

\maketitle

\section{Introduction}\label{sec1}
In numerical methods for BVPs the error estimate is quite often
expressed only through a particular discretization parameter
(e.\,g. a mesh step $h$, a number $N$ of summands
in a truncated sum of an infinite series, etc.).
However, some other features can also be important for the convergence rate.
In one such case, when we deal with the Dirichlet boundary condition
and therefore near the boundary an approximate solution is expected to be more precise,
it is natural to additionally estimate the error through the distance
from a point inside the domain to its boundary.
The impact of the Dirichlet boundary condition on the accuracy of an approximate solution
can be called \textit{a boundary effect} in the sense of\cite{Makarov1989}
and is usually evaluated by means of an appropriate weight function.
This aspect is investigated
in~\cite{Galba, MaykoRyabichev2016, Mayko2018-1, Mayko2018-2,
MakarovMayko2019-1,MakarovMayko2019-2, GavrilyukMakarovMayko2019-1}
(see also~\cite{KhoromskijMelenk}
where a method with  linear-logarithmic complexity
for solving elliptic problems with rough boundary data or geometry is presented).
In a similar way, it is possible to study the influence of the initial condition
and express the error estimate through both a discretization parameter and
the proximity of the time variable to the initial point
\cite{MakarovDemkiv2003, Gavrilyuk2010, Mayko2017}.
This aspect is of both theoretical and practical interest;
however, there are relatively few publications on the subject.

Another facet is connected with the dependence of the accuracy order
on differential properties of the input data
in the sense of~\cite{Babenko}.
Namely, a method is considered to be
\textit{a method without saturation of accuracy}
if an increase in smoothness of an exact solution
automatically boosts the convergence rate of an approximate solution.
It is also expected for a method to be \textit{exponentially convergent}
if input data are infinitely smooth in some sense.

Both of these concepts are  developed in  \cite{GavrilyukMakarovMayko2019-2},
where the proposed approximations of abstract differential equations
take into account the boundary effect and  do not have saturation of accuracy.
More specifically, in \cite{GavrilyukMakarovMayko2019-2} we consider
in a Hilbert space $H$
the  BVP for a linear second-order differential equation :
\begin{equation}\label{intr1}
\begin{split}
&\frac{d^{2} u(x)}{dx^{2} } -Au(x)=-f(x),\quad u(0)=0,\quad u(1)=0,
\end{split}
\end{equation}
where $A$ is a densely defined  self-adjoint positive definite operator
with  the spectral set $\Sigma (A)\subset [\lambda _{0} ,+\infty )$, $\lambda _{0} >0$.
To give the general idea of our approach,
we  briefly describe obtaining one of the two approximations for the solution $u(x)$.

By using the Fourier series representation of the right-hand side $f(x)$:
\begin{equation}\label{intr2}
\begin{split}
&f(x)=\sum _{k=1}^{\infty }\sqrt{2} \sin  (2k\pi x)f_{s,k}  +f_{0} +\sum _{k=1}^{\infty }\sqrt{2} \cos  (2k\pi x)f_{c,k},
\end{split}
\end{equation}
where
\begin{equation}\label{intr3}
\begin{split}
f_{s,k} &=\int _{0}^{1}f(x)\sqrt{2} \sin  (2k\pi x)dx ,\quad f_{c,k} =\int _{0}^{1}f(x)\sqrt{2} \cos  (2k\pi x)dx ,\quad k=1,2,\ldots ,\\
f_{0} &=\int _{0}^{1}f(x)dx,
\end{split}
\end{equation}
we present the solution $u(x)$ in the form
\begin{equation}\label{intr4}
\begin{split}
u(x)&=\sum _{k=1}^{\infty }\sqrt{2} \sin (2k\pi x)\left[(2k\pi )^{2} I + A\right]^{-1} f_{s,k}\\
&\quad +\big(A \,\sinh\sqrt{A}\, \big)^{-1} \left\{\sinh\sqrt{A} -\sinh\big(\sqrt{A} (1-x)\big)-\sinh\big(\sqrt{A} x\big)\right\}f_{0}\\
&\quad +\sum _{k=1}^{\infty }\sqrt{2} \left[(2k\pi )^{2} I+A\right]^{-1} \sinh^{-1} \sqrt{A} \\
&\quad \times \Big\{\cos (2k\pi x)\sinh\sqrt{A} -  \sinh\big(\sqrt{A} (1-x)\big)-\sinh\big(\sqrt{A} x\big)\Big\}f_{c,k}.
\end{split}
\end{equation}
Taking here the partial sum of $N$ summands in each infinite series,
we obtain for $u(x)$ the approximation $u_N(x)$
(for another approximation $u_{N,M}$ and its accuracy order see details in \cite{GavrilyukMakarovMayko2019-2}).
Next we prove the assertion.
\begin{theorem}[\cite{GavrilyukMakarovMayko2019-2}]
Let $\sigma >0$,
$f_0\in D(A^{\sigma})$
and let $f_{c,k}$, $f_{s,k}$ in (\ref{intr2}) satisfy the conditions
\begin{equation*}
\begin{split}
&\left\| f_{s} \right\| _{\sigma } \equiv \Big(\sum _{k=1}^{\infty }\, k^{2\sigma } \left\| f_{s,k} \right\| ^{2}  \Big)^{1/2} <\infty , \quad
 \left\| f_{c} \right\| _{\sigma } \equiv \Big(\sum _{k=1}^{\infty }\, k^{2\sigma } \left\| f_{c,k} \right\| ^{2}  \Big)^{1/2} <\infty.
\end{split}
\end{equation*}
Then the following weighted estimate holds true:
\begin{equation*}
\begin{split}
&\left\|  \frac{u(x)-u_{N} (x)}{\min  (x,1-x)} \, \right\|
\le \frac{C}{N^{\sigma +1/2} }
\big(\left\| f_{s} \right\| _{\sigma } +\left\| f_{c} \right\| _{\sigma } +\left\| A^{\sigma } f_{0} \right\| \big),
\end{split}
\end{equation*}
where $C>0$ is a constant independent of $N$.
\end{theorem}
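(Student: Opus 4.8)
The plan is to split the error into the three pieces corresponding to the three blocks of \eqref{intr4}. Write $u(x)-u_N(x)=R_s(x)+R_c(x)+R_0(x)$, where $R_s(x)=\sum_{k>N}\sqrt2\,\sin(2k\pi x)\,[(2k\pi)^2I+A]^{-1}f_{s,k}$ is the tail of the sine series, $R_c(x)$ is the tail of the cosine series, and $R_0(x)$ is the remainder in the middle block (the term carrying $f_0$) produced by replacing the hyperbolic operator functions $\sinh(\sqrt A\,\cdot)$ by their $N$-term Cayley-transform/Meixner-polynomial approximants. Each $R_\bullet$ vanishes at $x=0$ and $x=1$, which makes division by $\min(x,1-x)$ legitimate, and it suffices to bound the $H$-norm of each quotient by $CN^{-\sigma-1/2}$ times the corresponding data norm and then add; throughout, $C$ will be independent of $N$ and $x$.

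For the sine tail I would use the elementary inequality $|\sin(2k\pi x)|\le\min\{1,\,2k\pi\min(x,1-x)\}\le 2k\pi\min(x,1-x)$ together with the resolvent bound $\|[(2k\pi)^2I+A]^{-1}\|\le(2k\pi)^{-2}$, valid since $\Sigma(A)\subset[\lambda_0,\infty)$. By the triangle inequality,
\[
\bigl\|R_s(x)/\min(x,1-x)\bigr\|\le\sum_{k>N}\sqrt2\,(2k\pi)(2k\pi)^{-2}\|f_{s,k}\|=\frac{1}{\sqrt2\,\pi}\sum_{k>N}\frac{\|f_{s,k}\|}{k},
\]
and Cauchy--Schwarz gives $\sum_{k>N}k^{-1}\|f_{s,k}\|\le\bigl(\sum_{k>N}k^{-2-2\sigma}\bigr)^{1/2}\|f_s\|_\sigma\le CN^{-\sigma-1/2}\|f_s\|_\sigma$, which is the required bound for this block.

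The cosine tail is the core of the argument and the step I expect to cost the most work. Writing the operator acting on $f_{c,k}$ as a function of $A$ and using the elementary identity $\sinh(\sqrt\lambda(1-x))+\sinh(\sqrt\lambda x)=\frac{\cosh(\sqrt\lambda(\tfrac12-x))}{\cosh(\sqrt\lambda/2)}\,\sinh\sqrt\lambda$, the spectral theorem reduces the whole block to the scalar estimate
\[
\sup_{\lambda\ge\lambda_0}\ \frac{|\beta_k(x,\lambda)|}{\min(x,1-x)\,\bigl[(2k\pi)^2+\lambda\bigr]}\le\frac{C}{k},\qquad\beta_k(x,\lambda):=\cos(2k\pi x)-\frac{\cosh\bigl(\sqrt\lambda(\tfrac12-x)\bigr)}{\cosh(\sqrt\lambda/2)}.
\]
To prove this I would note that $\beta_k(\cdot,\lambda)$ vanishes at $x=0$ and $x=1$ and is symmetric under $x\mapsto1-x$, and that on $[0,\tfrac12]$ its $x$-derivative is bounded by $2k\pi+\sqrt\lambda$: the $\cos$ part contributes at most $2k\pi$, while the hyperbolic part contributes $\sqrt\lambda\,\sinh(\sqrt\lambda(\tfrac12-x))/\cosh(\sqrt\lambda/2)\le\sqrt\lambda$ because $0\le\sqrt\lambda(\tfrac12-x)\le\sqrt\lambda/2$ there. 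Hence, by the mean value theorem, $|\beta_k(x,\lambda)|\le(2k\pi+\sqrt\lambda)\min(x,1-x)$, and since $(2k\pi+\sqrt\lambda)/\bigl[(2k\pi)^2+\lambda\bigr]\le\sqrt2/\sqrt{(2k\pi)^2+\lambda}\le\sqrt2/(2k\pi)$, the claimed $C/k$ bound follows uniformly in $\lambda$. The same triangle-inequality-plus-Cauchy--Schwarz step as for the sine block then yields $\bigl\|R_c(x)/\min(x,1-x)\bigr\|\le CN^{-\sigma-1/2}\|f_c\|_\sigma$. The point to watch is that the boundary cancellation in $\cos(2k\pi x)\sinh\sqrt A-\sinh(\sqrt A(1-x))-\sinh(\sqrt A x)$ must be organised through the identity above so as to deliver \emph{both} the weight $\min(x,1-x)$ and the surplus factor $(2k\pi+\sqrt\lambda)^{-1}$ that guarantees summability in $k$; a crude termwise triangle inequality inside the bracket destroys the $k$-decay.

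For the middle block, the operator $\bigl(A\sinh\sqrt A\bigr)^{-1}\{\sinh\sqrt A-\sinh(\sqrt A(1-x))-\sinh(\sqrt A x)\}$ corresponds to the scalar function $\lambda^{-1}\bigl(1-\cosh(\sqrt\lambda(\tfrac12-x))/\cosh(\sqrt\lambda/2)\bigr)$, which again vanishes at $x=0,1$ and, by the same derivative estimate, is $O(\min(x,1-x))$ uniformly in $\lambda\ge\lambda_0$. Feeding $f_0\in D(A^\sigma)$ into the convergence estimate for the $N$-term Cayley-transform/Meixner representation of the hyperbolic operator functions --- the backbone of the construction of $u_N$ --- and combining it with this weight produces the remaining contribution $CN^{-\sigma-1/2}\|A^\sigma f_0\|$; summing the three estimates gives the assertion. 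In short, the only genuinely non-routine ingredient is the uniform weighted scalar bound for $\beta_k$ in the cosine block, and in particular recognising the simplification that exposes its vanishing at the endpoints; the rest is bookkeeping with the triangle inequality, the resolvent bound, and Cauchy--Schwarz.
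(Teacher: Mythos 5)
First, note that the paper does not actually prove this theorem: it is imported from \cite{GavrilyukMakarovMayko2019-2}, and the closest in-paper proof is that of Theorem~\ref{th3}, which works in a Banach space and therefore cannot use your main tool. Your treatment of the sine and cosine tails is correct and is a genuinely different route: where the paper must represent each operator by a Dunford--Cauchy integral over the sectorial contour \eqref{11} and invoke the resolvent bound \eqref{3} together with Lemmas~\ref{lem2} and~\ref{lem4}, you use the spectral theorem for the self-adjoint $A$ and reduce everything to scalar estimates. The identity $\sinh(\sqrt\lambda(1-x))+\sinh(\sqrt\lambda x)=\sinh\sqrt\lambda\,\cosh(\sqrt\lambda(\tfrac12-x))/\cosh(\sqrt\lambda/2)$, the mean-value bound $|\beta_k(x,\lambda)|\leqslant(2k\pi+\sqrt\lambda)\min(x,1-x)$, and the inequality $(2k\pi+\sqrt\lambda)/[(2k\pi)^2+\lambda]\leqslant\sqrt2/(2k\pi)$ are all correct, and together with Cauchy--Schwarz they do deliver the rate $N^{-\sigma-1/2}$ for those two blocks. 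This is cleaner than the paper's machinery, at the price of being restricted to the Hilbert-space/self-adjoint setting (which the quoted theorem assumes).

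The genuine gap is the $f_0$ block. You assert that ``feeding $f_0\in D(A^\sigma)$ into the convergence estimate for the $N$-term Cayley-transform/Meixner representation \dots\ produces the remaining contribution $CN^{-\sigma-1/2}\|A^\sigma f_0\|$,'' but you never state or prove that estimate, and it is precisely the non-routine content here: one must expand the middle operator in the series $\sum_j[v_j(1-x)+v_j(x)][(I+A)^{-1}A]^j$, establish the weighted decay $|v_j(x)/\min(x,1-x)|\leqslant Cj^{-1/2+\varepsilon}$ (the analogue of Lemma~\ref{lem4}) and the decay of $\|[(I+A)^{-1}A]^jA^{-1-\sigma}\|$ (the analogue of Lemmas~\ref{lem2} and~\ref{lem5}), and then sum the tail --- this is exactly the $D_2$ computation in \eqref{28}. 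Your three-term splitting also hides an ambiguity you must resolve before the proof can be completed: if $u_N$ truncates only the $k$-series of \eqref{intr4}, then your $R_0$ is identically zero and the term $\|A^\sigma f_0\|$ should not appear in the bound at all; if instead the hyperbolic operator functions in the $f_0$ and cosine blocks are themselves replaced by truncated Cayley/Meixner series (as in \eqref{23}), then besides the missing $R_0$ estimate your decomposition omits the cross terms coupling the $k$-truncation with the $j$-truncation (the analogues of $D_4$ and $D_5$ in \eqref{25}), which require their own estimates. As written, the proposal proves only two of the three asserted contributions.
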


The aim of the present paper is to extend this study
to a more general case of a Banach space
and obtain some new results.

The paper is organized as follows.
In Section~\ref{sec2} we consider the BVP for a linear second-order ODE
in a Banach space with a strongly positive operator coefficient,
i.e. a densely defined closed linear operator
under a certain assumption about its spectrum and resolvent.
We  introduce some useful notation for spaces and norms
and we prove a number of auxiliary inequalities used throughout the paper.
The exact solution of the BVP is represented in the form of the infinite series
involving the Meixner polynomials of the independent variable
and the Cayley transform of the operator coefficient.
That representation naturally gives rise to the approximate solution
in the form of the partial sum which is then studied
under various conditions about smoothness of the input data.
Namely, we prove two weighted error estimates
depending on both the discretization parameter $N$
(the number of summands in the partial sum)
and the distance to the boundary points of the interval.
In Section~\ref{sec5} we take a similar approach to the study
of the BVP for the inhomogeneous equation.
More specifically, we write down the exact solution
in the form of the infinite series
by the use of the operator Green function and the Fourier series representation
of the right-hand side of the equation.
The partial sum of this series produces the approximate solution
which is investigated under some smoothness assumptions concerning
the descending order of the Fourier coefficients.
Then we discuss the proven results
and make a few final comments.

\section{BVP for the homogeneous equation}\label{sec2}
We consider in a Banach space $E$ the boundary value problem
\begin{equation}\label{1}
\begin{split}
\frac{d^2u(x)}{dx^2} &-Au(x)=0,\quad x\in(0,1),\\
u(0)&=0,\quad u(1)=u_1,
\end{split}
\end{equation}
where $u(x)\!\!: [0,1]\to E$ is an unknown vector-valued function,
$u_1\in E$ is a given vector,
$A\!\!:E\to E$ \,is a densely defined closed linear operator
with the domain $D(A)$, the resolvent set $\rho(A)$
and the spectrum $\sigma(A)$.
Let $A$ satisfy the condition (see \cite[p.~69]{Pazy}):
there exit constants
$\varphi \in(0,\pi /2)$,
$\gamma >0$, $L >0$\, such that
\begin{equation}\label{2}
\Sigma \overset{\text{def}}{=}\{z\in\mathbb{C}\mid
\varphi\leqslant|\arg(z)| \leqslant\pi\}\cup
\{z\in\mathbb{C}\mid |z|\leqslant \gamma\} \subset \rho(A)
\end{equation}
and
\begin{equation}\label{3}
\|(zI-A)^{-1}\|\leqslant \frac{L}{1+|z|}\quad \forall z\in\Sigma.
\end{equation}
It is shown in~\cite{Pazy}  that strongly elliptic operators of order $2m$
in $L_p(\Omega)$ for $1\leqslant p \leqslant +\infty$
and a bounded domain $\Omega\subset \mathbb R^n$ with a smooth boundary $\partial \Omega$
produce important  examples  of such operators.

Note that in~\cite{GavrilyukMakarov1999}
a densely defined closed linear operator satisfying~(\ref{2}) and (\ref{3})
is called \textit{a strongly positive operator}.
It is proved~\cite[p.~708]{GavrilyukMakarov1999} that for \,$u_1\in D(A^\sigma)$,  $\sigma>1$,
the solution $u(x)$ of problem~(\ref{1}) can be represented in the form
\begin{equation}\label{4}
u(x)\equiv\sinh^{-1}(\sqrt{A})\sinh(x\sqrt{A})u_1
=\sum_{k=0}^{\infty}v_k(x)y_k;
\end{equation}
here
\begin{equation}\label{5}
y_k=(I+A)^{-1}Ay_{k-1}=[(I+A)^{-1}A]^k u_1,
\end{equation}
$(I+A)^{-1}A$ is the Cayley transform of the operator $A$,
and the functions $v_k(x)$ are defined
by the recurrent sequence of the integral equations
\begin{gather}\label{6}
v_k(x)=v_{k-1}(x)-\int_0^1 G_0(x,\xi) v_{k-1}(\xi)\,d\xi\quad x\in[0,\,1],\quad k=2,3,\ldots,\\
v_0(x)=x,\quad v_1(x)=-\frac{1}{3!}x(1-x^2),\notag
\end{gather}
where
\begin{equation*}
G_0(x,\xi)=
\begin{cases}
x(1-\xi)     &\text{if \,$x\leqslant\xi$,}\\
\xi(1-x)&\text{if \,$\xi\leqslant x$},
\end{cases}
\end{equation*}
is the Green function of the differential operator
\begin{equation*}
Lv(x)=-v''(x),\,x\in(0,\,1),\quad u(0)=0,\,u(1)=0.
\end{equation*}

Note that polynomials $v_k(x)$ are closely connected
with the Meixner polynomials~\cite{Meixner}
and have recently been studied in~\cite{MakarovDAN2019}.
The Meixner polynomials play the same role as the Laguerre polynomials
in the Cayley transform method for solving the abstract Cauchy problem
for a first-order differential equation
with a strongly positive operator coefficient \cite{GavrilyukMakarov2004}.

Next we recall some notation and terminology related
to certain classes of vectors from $E$ (see~\cite{GorbachukKnyazyuk1989}).
We denote by $C^{\infty}(A)$
the set of all infinitely differentiable vector of $A$:\,
$C^{\infty}(A)=\bigcap\limits_{n=0}^{\infty}D(A^n)$.
It is shown in~\cite{GorbachukKnyazyuk1989} that if a densely defined closed linear operator $A$
has at least one regular point,
then $C^{\infty}(A)$ is dense in $E$:\, $\overline{C^{\infty}(A)}=E$.

Let $(m_n)_{n=0}^{\infty}$ be a  nondecreasing sequence of positive numbers
and let $\nu>0$.
We denote by $C(A,(m_n),\nu)$ the Banach space of vectors $f$ from $C^{\infty}(A)$ with the norm
\begin{equation}\label{norm}
\|f\|_{C(A,\,(m_n),\,\nu)}= \sup_{n}\frac{\|A^n f\|}{\nu^n m_n}.
\end{equation}

The class $C(A,(m_n))\overset{\text{def}}{=}\bigcup\limits_{\nu>0}C(A, (m_n), \nu)$
for various $(m_n)$ is discussed in~\cite{GorbachukKnyazyuk1989}.
Namely, vectors from $C(A, (n^n))$ with $m_n=n^n$ are called analytic for the operator $A$~\cite{Nelson};
vectors from the Gevrey class of Roumieu type $C(A, (n^{n\beta}))$
with $m_n=n^{n\beta},\,\beta>1$,
are called ultradifferentiable~\cite{Beals},
and vectors from $C(A,(1))$ with $m_n\equiv 1$ are
known as vectors of exponential type~\cite{Radyno1985}.

For convenience, in the next four lemmas we prove some auxiliary inequalities
which we will need throughout the paper. 
\begin{lemma}\label{lem0}
For a strongly positive operator $A$
the following estimate holds true:
\begin{equation}\label{6.1}
\left\|\left (I+\frac A j\right)^j A^{-j}\right\|
\leqslant  (L+1)^j \quad (j\in\mathbb{N}).
\end{equation}
\end{lemma}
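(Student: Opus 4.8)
The plan is to reduce the claim to an elementary estimate on the bounded operator $A^{-1}$, so that the constant $(L+1)^j$ comes out almost for free. First I would observe that, by~\eqref{2}, the disc $\{z\in\mathbb{C}\mid |z|\le\gamma\}$ lies in $\rho(A)$, so in particular $0\in\rho(A)$ and $A^{-1}$ is a bounded linear operator on $E$; moreover, taking $z=0$ in~\eqref{3} gives the key numerical bound
\[
\|A^{-1}\|=\|(0\cdot I-A)^{-1}\|\le\frac{L}{1+0}=L .
\]

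Next I would rewrite the operator in~\eqref{6.1} in terms of $A^{-1}$ alone. For any $f\in E$ we have $A^{-1}f\in D(A)$, hence
$\bigl(I+\tfrac{A}{j}\bigr)A^{-1}f=A^{-1}f+\tfrac1j A A^{-1}f=\bigl(A^{-1}+\tfrac1j I\bigr)f$,
i.e. the operator identity $\bigl(I+\tfrac{A}{j}\bigr)A^{-1}=A^{-1}+\tfrac1j I$ holds on all of $E$. Since the right-hand side maps $E$ into $D(A)$ (and, after $k$ successive applications, into $D(A^k)$), a short induction on $j$ — at each step pushing one factor $\bigl(I+\tfrac{A}{j}\bigr)A^{-1}$ through and using that $A^{-1}$ commutes with $I+\tfrac{A}{j}$ on $D(A)$ — yields
\[
\Bigl(I+\frac{A}{j}\Bigr)^{j}A^{-j}=\Bigl(A^{-1}+\frac1j I\Bigr)^{j}.
\]

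Finally I would invoke submultiplicativity of the operator norm together with the bound from the first step:
\[
\Bigl\|\Bigl(A^{-1}+\frac1j I\Bigr)^{j}\Bigr\|\le\Bigl(\|A^{-1}\|+\frac1j\Bigr)^{j}\le\Bigl(L+\frac1j\Bigr)^{j}\le(L+1)^{j},
\]
the last inequality because $1/j\le1$ for every $j\in\mathbb{N}$. This is precisely~\eqref{6.1}.

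As for difficulty, there is no real obstacle here: the statement is a purely algebraic manipulation backed by the single resolvent bound $\|A^{-1}\|\le L$ extracted from~\eqref{3} at $z=0$. The only point demanding a little care is the bookkeeping with domains in the inductive step — one must verify at each stage that the intermediate vector lies in $D(A)$ before applying the unbounded part $\tfrac{A}{j}$ of $I+\tfrac{A}{j}$ again — but this is automatic, since every intermediate operator acts as a polynomial in the bounded operator $A^{-1}$ and therefore lands in $D(A^k)$ for the relevant $k$.
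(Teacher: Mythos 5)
Your proof is correct and is essentially the paper's own argument in a slightly repackaged form: the paper extracts $\|A^{-1}\|\leqslant L$ from~(\ref{3}) at $z=0$ and expands $\bigl(I+\frac{A}{j}\bigr)^jA^{-j}$ by the binomial theorem into $\sum_{s=0}^{j}\binom{j}{s}j^{-s}A^{-(j-s)}$, which is exactly the binomial expansion of your $\bigl(A^{-1}+\frac{1}{j}I\bigr)^{j}$, and both arguments conclude via $\bigl(L+\frac{1}{j}\bigr)^{j}\leqslant(L+1)^{j}$. No substantive difference.
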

\begin{proof}
From~(\ref{3}) we have  $\|A^{-1}\|\leqslant L$.
Therefore
\begin{equation*}
\begin{split}
\left\|\left (I+\frac A j\right)^j A^{-j}\right\|
&= \left\| \sum\limits_{s=0}^{j}\binom{j}{s} \left(\frac A j\right)^s A^{-j} \right \|
= \left\| \sum\limits_{s=0}^{j}\binom{j}{s} \frac{A^{-(j-s)}}{j^s} \right \|\\
&\leqslant \sum\limits_{s=0}^j \binom{j}{s}\frac{\|A^{-(j-s)}\|}{j^s}
\leqslant \sum\limits_{s=0}^j \binom{j}{s}\frac{\|A^{-1}\|^{j-s}}{j^s}\\
&\leqslant \sum\limits_{s=0}^j \binom{j}{s}\frac{L^{j-s}}{j^s}
=\left( L +\frac 1 j\right)^j \leqslant (L+1)^j.
\end{split}
\end{equation*}
\end{proof}
\begin{lemma}\label{lem1}
For $n>0$ and $\alpha >0$ the following inequality holds true:
\begin{equation*}
\max_{t\geqslant 1} \left[\Big(1-\frac{1}{t}\Big)^n t^{-\alpha}\right]
\leqslant \Big(\frac{\alpha}{e}\Big)^{\alpha} n^{-\alpha}.
\end{equation*}
\end{lemma}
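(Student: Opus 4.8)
The plan is to reduce the two-variable maximization to a one-variable calculus problem by a substitution, then locate the maximizer explicitly. First I would substitute $s = 1/t$, so that $t \geqslant 1$ corresponds to $s \in (0,1]$, and the function to be maximized becomes
\[
\varphi(s) = (1-s)^n s^{\alpha}, \qquad s\in(0,1].
\]
Since $\varphi(0)=\varphi(1)=0$ and $\varphi>0$ on the open interval, the maximum is attained at an interior critical point. Differentiating $\log\varphi(s) = n\log(1-s) + \alpha\log s$ gives
\[
\frac{\varphi'(s)}{\varphi(s)} = -\frac{n}{1-s} + \frac{\alpha}{s},
\]
which vanishes precisely at $s_* = \dfrac{\alpha}{n+\alpha}$, and this is a maximum because $\log\varphi$ is concave on $(0,1)$.

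Next I would evaluate $\varphi$ at $s_*$. We have $1 - s_* = \dfrac{n}{n+\alpha}$, so
\[
\varphi(s_*) = \Big(\frac{n}{n+\alpha}\Big)^n \Big(\frac{\alpha}{n+\alpha}\Big)^{\alpha}
= \frac{n^n\,\alpha^\alpha}{(n+\alpha)^{n+\alpha}}.
\]
The remaining task is to bound this by $(\alpha/e)^\alpha n^{-\alpha}$, i.e. to show
\[
\frac{n^n\,\alpha^\alpha}{(n+\alpha)^{n+\alpha}} \leqslant \frac{\alpha^\alpha}{e^\alpha\,n^\alpha},
\]
which after cancelling $\alpha^\alpha$ and rearranging is equivalent to
\[
\Big(1+\frac{\alpha}{n}\Big)^{n+\alpha} \geqslant e^{\alpha}.
\]
This last inequality is elementary: taking logarithms it reads $(n+\alpha)\log(1+\alpha/n)\geqslant \alpha$, and writing $x=\alpha/n>0$ it becomes $(1+x)\log(1+x)\geqslant x$ after multiplying by $n$; this holds because $\log(1+x)\geqslant x/(1+x)$ for $x>-1$ (a standard estimate, e.g.\ from $\log t \geqslant 1 - 1/t$), which rearranges to exactly $(1+x)\log(1+x)\geqslant x$.

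I do not anticipate a serious obstacle here; the only point requiring a little care is the very last step, the inequality $(1+x)\log(1+x)\geqslant x$ for $x>0$. One clean way to see it is to set $g(x) = (1+x)\log(1+x) - x$, note $g(0)=0$, and observe $g'(x) = \log(1+x) \geqslant 0$ for $x\geqslant 0$, so $g$ is nondecreasing and hence nonnegative. Assembling these pieces — the substitution, the critical-point computation, the explicit value at $s_*$, and this final elementary bound — yields
\[
\max_{t\geqslant 1}\Big[\Big(1-\tfrac1t\Big)^n t^{-\alpha}\Big] = \frac{n^n\alpha^\alpha}{(n+\alpha)^{n+\alpha}} \leqslant \Big(\frac{\alpha}{e}\Big)^\alpha n^{-\alpha},
\]
as claimed.
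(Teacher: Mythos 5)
Your proof is correct and follows essentially the same route as the paper: locate the interior critical point (your $s_*=\alpha/(n+\alpha)$ is just the paper's $t=(n+\alpha)/\alpha$ under the substitution $s=1/t$), evaluate the maximum as $n^n\alpha^\alpha/(n+\alpha)^{n+\alpha}$, and bound it via $(1+\alpha/n)^{n+\alpha}\geqslant e^{\alpha}$. The only difference is that you spell out the final elementary inequality $(1+x)\log(1+x)\geqslant x$, which the paper invokes silently in the step $\big(\tfrac{n}{n+\alpha}\big)^{n+\alpha}\leqslant e^{-\alpha}$.
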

\begin{proof}
We consider the function
$\varphi (t)=\left(1-\frac 1 t\right)^n t^{-\alpha}$,\, $t\geqslant 1$,
and find the derivative
\begin{equation*}
\varphi'(t) = (t-1)^{n-1}t^{-\alpha-n-1}(n+\alpha-\alpha t).
\end{equation*}
Then we have
\begin{equation*}
\begin{split}
\max_{t\geqslant 1} \varphi(t)&=\varphi\Big( \frac{n+\alpha}{\alpha}\Big)\\
&=\Big(1-\frac{\alpha}{n+\alpha}\Big)^n \Big(\frac{n+\alpha}{\alpha}\Big)^{-\alpha}
=\Big(\frac{n}{n+\alpha}\Big)^{n+\alpha}\alpha^{\alpha}n^{-\alpha}
\leqslant e^{-\alpha}\alpha^{\alpha}n^{-\alpha}.
\end{split}
\end{equation*}
\end{proof}
\begin{lemma}\label{lem2}
For $n>\alpha >0$  the following inequality holds true:
\begin{equation*}
\max_{t>0}\left[\Big( \frac{t}{t+1}\Big)^n t^{-\alpha}\right]
\leqslant \alpha^{\alpha}n^{-\alpha}.
\end{equation*}
\end{lemma}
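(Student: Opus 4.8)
The plan is to follow the same route as in the proof of Lemma~\ref{lem1}: the quantity under the maximum is a smooth function of a single real variable, so I would locate its unique interior critical point, confirm it is a global maximum, and then simplify the value obtained there.

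Concretely, set $\psi(t)=\bigl(t/(t+1)\bigr)^{n}t^{-\alpha}=t^{\,n-\alpha}(t+1)^{-n}$ for $t>0$. Since $n-\alpha>0$, we have $\psi(t)\to 0$ as $t\to 0^{+}$, and since $\psi(t)\sim t^{-\alpha}$ as $t\to+\infty$ it also tends to $0$ there; hence the supremum is attained at an interior point. The product rule gives
\begin{equation*}
\psi'(t)=t^{\,n-\alpha-1}(t+1)^{-n-1}\bigl[(n-\alpha)(t+1)-nt\bigr]
        =t^{\,n-\alpha-1}(t+1)^{-n-1}\bigl[(n-\alpha)-\alpha t\bigr],
\end{equation*}
so $\psi'$ vanishes only at $t_{*}=(n-\alpha)/\alpha>0$, is positive for $t<t_{*}$ and negative for $t>t_{*}$; thus $t_{*}$ is the global maximizer.

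It then remains to evaluate $\psi(t_{*})$. Using $t_{*}+1=n/\alpha$, and hence $t_{*}/(t_{*}+1)=(n-\alpha)/n$, one obtains
\begin{equation*}
\psi(t_{*})=\Bigl(\frac{n-\alpha}{n}\Bigr)^{n}\Bigl(\frac{n-\alpha}{\alpha}\Bigr)^{-\alpha}
           =\frac{(n-\alpha)^{\,n-\alpha}\,\alpha^{\alpha}}{n^{n}}.
\end{equation*}
Finally, since $0<n-\alpha<n$ we have $(n-\alpha)^{\,n-\alpha}\le n^{\,n-\alpha}$, and therefore $\psi(t_{*})\le n^{\,n-\alpha}\alpha^{\alpha}/n^{n}=\alpha^{\alpha}n^{-\alpha}$, which is the claimed estimate. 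There is no genuine obstacle here; the only mildly delicate point is the exponent bookkeeping when simplifying $\psi(t_{*})$, and the whole argument runs in close parallel to that of Lemma~\ref{lem1}.
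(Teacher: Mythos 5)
Your proof is correct and follows essentially the same route as the paper: differentiate $t^{\,n-\alpha}(t+1)^{-n}$, locate the unique critical point $t_{*}=(n-\alpha)/\alpha$, and bound the value there by noting $\bigl((n-\alpha)/n\bigr)^{n-\alpha}\leqslant 1$ (your inequality $(n-\alpha)^{\,n-\alpha}\leqslant n^{\,n-\alpha}$ is the same observation in a different arrangement). The added checks of the boundary behaviour and the sign of $\psi'$ are fine but change nothing of substance.
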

\begin{proof}
For the function
$\varphi(t)=\left(\frac{t}{t+1} \right)^n t^{-\alpha}$,\, $t>0$,
we have the derivative
\begin{equation*}
\varphi'(t)=t^{n-\alpha-1}(t+1)^{-n-1}(n-\alpha-\alpha t),
\end{equation*}
which gives the estimate
\begin{equation*}
\max_{t>0}\varphi(t)=\varphi\Big(\frac{n-\alpha}{\alpha}\Big)
=\Big(\frac{n-\alpha}{n}\Big)^n\Big(\frac{n-\alpha}{\alpha}\Big)^{-\alpha}
=\Big(\frac{n-\alpha}{n}\Big)^{n-\alpha}\alpha^{\alpha}n^{-\alpha}
<\alpha^{\alpha}n^{-\alpha}.
\end{equation*}
\end{proof}
\begin{lemma}\label{lem3}
The following estimate holds true:
\begin{equation*}
\max_{t\geqslant 0}
\left[\frac{t}{\left(\frac{t}{k}+1\right)(t+1)}\right]^k
=\left[\frac{\sqrt{k}}{\left(\frac{1}{\sqrt k}+1\right)(\sqrt{k}+1)}\right]^k
\leqslant ee^{-2\sqrt k} \quad  (k\in\mathbb N).
\end{equation*}
It is unimprovable in the asymptotic sense\footnote{We write
$f(x)\sim g(x)$ as $x\to x_0$ provided that $\lim\limits_{x\to x_0}\frac{f(x)}{g(x)}=1$.}:
\begin{equation*}
\left[\frac{\sqrt{k}}{\left(\frac{1}{\sqrt k}+1\right)(\sqrt{k}+1)}\right]^k
\sim ee^{-2\sqrt k} \quad \text{as} \quad k\to\infty.
\end{equation*}

\end{lemma}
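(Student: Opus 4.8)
The plan is to analyze the single-variable function
\[
\psi(t)=\left[\frac{t}{\left(\frac{t}{k}+1\right)(t+1)}\right]^k,\qquad t\geqslant 0,
\]
in two stages: first locate its maximum over $t\geqslant 0$ and evaluate it exactly, then bound that exact value by $ee^{-2\sqrt k}$, and finally extract the asymptotic equivalence. Since the outer exponent $k$ is fixed and positive, maximizing $\psi$ is equivalent to maximizing the base
\[
g(t)=\frac{t}{\left(\frac{t}{k}+1\right)(t+1)}=\frac{kt}{(t+k)(t+1)}.
\]
First I would differentiate: writing $g(t)=kt/[(t+k)(t+1)]$, one gets $g'(t)=k\,[(t+k)(t+1)-t(2t+k+1)]/[(t+k)(t+1)]^2$, and the numerator simplifies to $k(k-t^2)$. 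Hence $g'(t)=0$ exactly at $t=\sqrt k$ on $(0,\infty)$, with $g$ increasing on $(0,\sqrt k)$ and decreasing on $(\sqrt k,\infty)$, so the maximum is attained at $t=\sqrt k$. Substituting gives
\[
g(\sqrt k)=\frac{k\sqrt k}{(\sqrt k+k)(\sqrt k+1)}=\frac{\sqrt k}{\left(\frac1{\sqrt k}+1\right)(\sqrt k+1)},
\]
which is precisely the claimed closed form for the maximizer, after raising to the $k$th power.

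Next I would prove the upper bound. Factoring, $g(\sqrt k)=\sqrt k/[(\sqrt k+1)^2/\sqrt k]=k/(\sqrt k+1)^2=\bigl(\sqrt k/(\sqrt k+1)\bigr)^2=\bigl(1+1/\sqrt k\bigr)^{-2}$. Therefore
\[
\psi(\sqrt k)=g(\sqrt k)^k=\left(1+\frac1{\sqrt k}\right)^{-2k}.
\]
Now the key elementary inequality is $\log(1+x)\geqslant x-x^2/2$ for $x\geqslant 0$ (equivalently $(1+x)e^{-x+x^2/2}\geqslant 1$), applied with $x=1/\sqrt k$: it yields $\log\bigl(1+1/\sqrt k\bigr)\geqslant 1/\sqrt k-1/(2k)$, hence
\[
-2k\log\left(1+\frac1{\sqrt k}\right)\leqslant -2k\left(\frac1{\sqrt k}-\frac1{2k}\right)=-2\sqrt k+1,
\]
so $\psi(\sqrt k)\leqslant e^{1-2\sqrt k}=e\,e^{-2\sqrt k}$, which is the asserted estimate.

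For the asymptotic sharpness, I would expand $\log\bigl(1+1/\sqrt k\bigr)=1/\sqrt k-1/(2k)+O(k^{-3/2})$ as $k\to\infty$, so that $-2k\log\bigl(1+1/\sqrt k\bigr)=-2\sqrt k+1+O(k^{-1/2})$, whence $\psi(\sqrt k)=e^{-2\sqrt k+1+o(1)}\sim e\,e^{-2\sqrt k}$, i.e. the ratio $\psi(\sqrt k)/(e\,e^{-2\sqrt k})\to 1$. I do not anticipate a serious obstacle here: the entire argument is a one-variable calculus optimization followed by a standard two-term logarithm estimate. The only point requiring a little care is bookkeeping the sign and direction of the inequality $\log(1+x)\geqslant x-x^2/2$ (it goes the "right way" precisely because we then multiply by the negative factor $-2k$), and checking that the critical point $t=\sqrt k$ indeed lies in the admissible range and is a maximum rather than a minimum, which the sign analysis of $g'(t)=k(k-t^2)/[(t+k)(t+1)]^2$ settles immediately.
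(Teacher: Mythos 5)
Your proposal is correct and follows essentially the same route as the paper: locate the critical point $t=\sqrt k$ (the paper via $\frac{d}{dt}\ln g(t)$, you via $g'(t)$ directly, which is an immaterial difference), rewrite the maximum as $\left(1+\frac{1}{\sqrt k}\right)^{-2k}$, apply $\ln(1+x)\geqslant x-\frac{x^2}{2}$ for the bound, and the three-term expansion of the logarithm for the asymptotic equivalence. No gaps.
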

\begin{proof}
Considering the function
$g(t)=\bigg[\frac{t}{\left(\frac{t}{k}+1\right)(t+1)}\bigg]^k$,\, $t\geqslant 0$,
we get
\begin{equation*}
\frac{d \ln g(t)}{dt}=\frac{k(k-t^2)}{t(t+k)(t+1)},
\end{equation*}
which means that $g(t)$ takes its maximum at the point $t=\sqrt k$\,:
\begin{equation*}
\max_{t\geqslant 0} g(t) = g(\sqrt t)
= \left[\frac{\sqrt{k}}{\left(\frac{1}{\sqrt k}+1\right)(\sqrt{k}+1)}\right]^k
=\left( 1+\frac{1}{\sqrt{k}} \right)^{-2k}.
\end{equation*}
Making use of the inequality
\begin{equation*}
\bigg(1 +\frac 1{\sqrt k}    \bigg)^{-2k}
=\exp{\left\{-2k \ln\bigg( 1+\frac 1{\sqrt k}  \bigg)\right\}}
\leqslant \exp{\left\{-2k \bigg( \frac1{\sqrt k} - \frac 1{2k}\bigg)\right\}}
=ee^{-2\sqrt k}
\end{equation*}
and the relation\footnote{We write $f(x)=o(g(x))$ as $x\to x_0$
provided that $\lim\limits_{x\to x_0} \frac{f(x)}{g(x)}=0$.}
\begin{gather*}
\bigg(1 +\frac 1{\sqrt k}    \bigg)^{-2k}
=\exp{\left\{-2k \ln\bigg( 1+\frac 1{\sqrt k}  \bigg)\right\}}
=\exp\left\{-2k\bigg(
\frac{1}{\sqrt{k}}-\frac{1}{2k}+ o\Big(\frac{1}{k}\Big)
\bigg)\right\}\\
=\exp\left\{ 1-2\sqrt{k} +o(1)\right\}
\sim ee^{-2\sqrt{k}}\quad\text{as}\,\, k\to\infty,
\end{gather*}
we arrive at the conclusion of the lemma.
\end{proof}

Now we can move on to the inequalities for $v_k(x)$.
\begin{lemma}\label{lem4}
For the polynomials $v_k(x)$,\,$k=1,2,\ldots$, the following estimates hold true:
\begin{equation}\label{7}
\left | \frac{v_k(x)}{\min(x,1-x)}\right |
\leqslant \frac{C_1}{k^{(1-\varepsilon_1)/2}},\quad x\in[0,\,1],
\end{equation}
where $0< \varepsilon_1 <1$,
$C_1=(\frac{1-\varepsilon_1}{e})^{(1-\varepsilon_1)/2}\frac 2 {\pi ^{1+\varepsilon_1}}
\zeta(1+\varepsilon_1)$, and
$\zeta(\cdot)$ is the Riemann zeta function;
\begin{equation}\label{8}
\left | \frac{v_k(x)}{\min(x,1-x)}\right | \leqslant \frac 1 3, \quad
x\in[0,\,1].
\end{equation}
\end{lemma}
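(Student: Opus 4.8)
The plan is to reduce the whole lemma to an explicit Fourier sine expansion of $v_k(x)$ and then invoke the scalar maximisation Lemma~\ref{lem1}. Write $\mathcal G$ for the integral operator $(\mathcal Gg)(x)=\int_0^1 G_0(x,\xi)g(\xi)\,d\xi$. Since $G_0$ is the Green function of $Lv=-v''$ with $v(0)=v(1)=0$, the functions $\sin(m\pi x)$, $m\in\mathbb N$, are eigenfunctions of $\mathcal G$ with eigenvalues $(m\pi)^{-2}$. A direct computation shows that $-\mathcal G v_0$ solves $-w''=x$, $w(0)=w(1)=0$, hence $-\mathcal G v_0=-\frac{1}{3!}x(1-x^2)=v_1$; and for $k\geqslant 2$ the recurrence (\ref{6}) is exactly $v_k=(I-\mathcal G)v_{k-1}$. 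Consequently $v_k=-(I-\mathcal G)^{k-1}\mathcal G v_0$ for every $k\geqslant 1$, and, expanding $v_0(x)=x=\sum_{m\geqslant 1}\frac{2(-1)^{m+1}}{m\pi}\sin(m\pi x)$ and noting that $0<1-\frac{1}{(m\pi)^2}<1$, one gets the closed form
\[
v_k(x)=-\sum_{m=1}^{\infty}\frac{2(-1)^{m+1}}{(m\pi)^3}\Big(1-\frac{1}{(m\pi)^2}\Big)^{k-1}\sin(m\pi x),
\]
the series being absolutely and uniformly convergent.

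Next I would use the elementary bound $|\sin(m\pi x)|\leqslant m\pi\min(x,1-x)$ for all $x\in[0,1]$, which follows from $|\sin\theta|\leqslant|\theta|$ applied to $\theta=m\pi x$ and to $\theta=m\pi(1-x)$. Dividing the series above by $\min(x,1-x)$ and estimating term by term gives
\[
\Big|\frac{v_k(x)}{\min(x,1-x)}\Big|\leqslant\sum_{m=1}^{\infty}\frac{2}{(m\pi)^2}\Big(1-\frac{1}{(m\pi)^2}\Big)^{k-1}.
\]
For (\ref{8}) I would bound each factor $\big(1-(m\pi)^{-2}\big)^{k-1}\leqslant 1$ and use $\sum_{m\geqslant 1}(m\pi)^{-2}=\zeta(2)/\pi^2=1/6$, which yields the bound $2/6=1/3$.

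For (\ref{7}), when $k\geqslant 2$ I would split $(m\pi)^{-2}=(m\pi)^{-(1+\varepsilon_1)}(m\pi)^{-(1-\varepsilon_1)}$ and apply Lemma~\ref{lem1} to the $m$-dependent decaying factor: with $t=(m\pi)^2\geqslant 1$, $n=k-1>0$, $\alpha=(1-\varepsilon_1)/2$,
\[
(m\pi)^{-(1-\varepsilon_1)}\Big(1-\frac{1}{(m\pi)^2}\Big)^{k-1}=t^{-\alpha}\Big(1-\frac1t\Big)^{n}\leqslant\Big(\frac{\alpha}{e}\Big)^{\alpha}(k-1)^{-\alpha}.
\]
Summing the remaining factor, $\sum_{m\geqslant 1}(m\pi)^{-(1+\varepsilon_1)}=\pi^{-(1+\varepsilon_1)}\zeta(1+\varepsilon_1)$, and passing from $(k-1)^{-\alpha}$ to $k^{-\alpha}$ via $(k-1)^{-\alpha}\leqslant 2^{\alpha}k^{-\alpha}$ (valid for $k\geqslant 2$), which absorbs the factor $2^{\alpha}$ into the base, namely $(\alpha/e)^{\alpha}2^{\alpha}=\big(\frac{1-\varepsilon_1}{e}\big)^{(1-\varepsilon_1)/2}$, produces exactly the constant $C_1$ of the lemma; the case $k=1$ is already contained in (\ref{8}). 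The main obstacle is the first step: recognising that the recurrence (\ref{6}) is a power of the fixed operator $I-\mathcal G$ acting on $v_1=-\mathcal G v_0$, so that passing to the sine basis diagonalises it and yields the explicit scalar expression above; once this is in hand the estimates are immediate consequences of Lemma~\ref{lem1} together with $\zeta(2)=\pi^2/6$.
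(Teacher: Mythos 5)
Your proposal is correct and follows essentially the same route as the paper: the same Fourier sine expansion of $v_k(x)$ with coefficients $\frac{2(-1)^p}{(p\pi)^3}\bigl(1-\frac{1}{(p\pi)^2}\bigr)^{k-1}$ (the paper obtains it by induction on the recurrence rather than by writing $v_k=-(I-\mathcal G)^{k-1}\mathcal G v_0$ and diagonalising, but this is only a difference in bookkeeping), the same implicit bound $|\sin(p\pi x)|\leqslant p\pi\min(x,1-x)$, the same application of Lemma~\ref{lem1} with $n=k-1$, $\alpha=(1-\varepsilon_1)/2$, the same absorption of $(k-1)^{-\alpha}\leqslant 2^{\alpha}k^{-\alpha}$ into the constant, and the same $\zeta(2)$ computation for (\ref{8}). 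The only detail worth making explicit is that reducing the case $k=1$ of (\ref{7}) to (\ref{8}) requires checking $C_1\geqslant\frac13$, which the paper verifies separately.
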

\begin{proof}
We continue $v_k(x)$ oddly onto the interval $[-1,\,0]$
and then periodically onto the whole real axis.
First, we prove that $v_k(x)$ can be represented in the form
\begin{equation}\label{9}
v_k(x)=\sum_{p=1}^{\infty}
\sqrt 2 a_p^{(k)}\sin(p\pi x),\quad x\in [0,\,1],
\end{equation}
with
\begin{equation*}
a_p^{(k)}=\sqrt 2 \int\limits_0^1v_k(x)\sin (p\pi x)\, dx
=\frac{\sqrt 2(-1)^p}{(p\pi)^3}
\Big(1-\frac1{(p\pi)^2}\Big)^{k-1}.
\end{equation*}
We make use of the mathematical induction method.
For $k=1$ formula~(\ref{9}) is true since we have
\begin{equation*}
v_1(x)=\sum_{p=1}^{\infty}
\sqrt 2 a_p^{(1)}\sin (p\pi x), \quad x\in [0,\,1],
\end{equation*}
with
\begin{equation*}
a_p^{(1)}=\sqrt 2\int\limits_0^1 v_1(x)\sin (p\pi x)\, dx
= -\frac{\sqrt 2}{6}\int\limits_0^1 x(1-x^2)\sin (p\pi x)\,dx
=\frac{\sqrt 2 (-1)^p}{(p\pi)^3}.
\end{equation*}
Assuming that formula~(\ref{9}) is true for some $k\in\mathbb N$,
we will prove it for $k+1$. We have
\begin{equation*}
\begin{split}
v_{k+1}(x)&=v_k(x)-\int\limits_0^1 G_0(x,\xi)v_k(\xi)\,d\xi
=\sum_{p=1}^{\infty}\sqrt 2 a_p^{(k)}
\bigg(1-\frac 1{(p\pi)^2}\sin(p\pi x)
\bigg),
\end{split}
\end{equation*}
from where we get
\begin{equation*}
\begin{split}
a_p^{(k+1)}= a_p^{(k)}
\bigg( 1-\frac 1{(p\pi)^2} \bigg)
&=\frac{\sqrt 2 (-1)^p}{(p\pi)^3}\bigg(1-\frac 1{(p\pi)^2}\bigg)^k,
\end{split}
\end{equation*}
which gives formula~(\ref{9}).

Next we prove estimate~(\ref{7}). We have
\begin{equation*}
\begin{split}
\left |
\frac{v_k(x)}{\min(x,1-x)}\right |
&= \left |
\sum_{p=1}^{\infty}
\sqrt 2\frac{\sqrt 2 (-1)^p}{(p\pi)^3}
\bigg( 1-\frac 1{(p\pi)^2} \bigg)^{k-1}
\frac{\sin(p\pi x)}{\min(x,1-x)}
\right |\\
&\leqslant \sum _{p=1}^{\infty}
\frac 2{(p\pi)^{1+\varepsilon_1}}
\frac 1{\big((p\pi)^2 \big)^{(1-\varepsilon_1)/2}}
\bigg(1-\frac 1{(p\pi)^2} \bigg)^{k-1}.
\end{split}
\end{equation*}
Applying lemma~\ref{lem1} for $n=k-1$,\, $\alpha =(1-\varepsilon_1)/2$,\,
$k=2,3,\ldots$,\,$0 < \varepsilon_1 <1$, we have
\begin{equation*}
\begin{split}
\bigg | \frac{v_k (x)}{\min (x,1-x)}\bigg |
&\leqslant
\sup_{p\in \mathbb N}
\left[\frac 1 {\big((p\pi)^2\big)^{(1-\varepsilon_1)/2}}
\bigg(1-\frac 1{(p\pi)^2} \bigg)^{k-1}
\right]
\sum_{p=1}^{\infty} \frac 2{(p\pi)^{1+\varepsilon_1}}\\
&\leqslant
\sup_{t\geqslant 1}
\left [ \frac 1{t^{(1-\varepsilon_1)/2}} \bigg(1-\frac 1 t\bigg)^{k-1} \right]
\frac 2{{\pi}^{1+\varepsilon_1}} \zeta(1+\varepsilon_1)\\
&\leqslant
\bigg(\frac{1-\varepsilon_1}{2e}\bigg)^{(1-\varepsilon_1)/2}
(k-1)^{-(1-\varepsilon_1)/2}\frac 2{\pi^{1+\varepsilon_1}} \zeta(1+\varepsilon_1)\\
&\leqslant\bigg(\frac{1-\varepsilon_1}{e}\bigg)^{(1-\varepsilon_1)/2}
\frac 2{\pi^{1+\varepsilon_1}} \zeta(1+\varepsilon_1)
k^{-(1-\varepsilon_1)/2}.
\end{split}
\end{equation*}
For $k=1$ inequality~(\ref{7}) takes the form
\begin{equation*}
\bigg|\frac{v_1(x)}{\min(x,1-x)}
\bigg|\leqslant C_1,\quad x\in[0,\,1],
\end{equation*}
from where it follows that it is true since
\begin{equation*}
\max_{0\leqslant x \leqslant 1}
\left |\frac{v_1(x)}{\min(x, 1-x)}\right |
=\max_{0\leqslant x \leqslant 1}
\left |\frac{ -\frac 1{3!}x(1-x^2)} {\min(x, 1-x)}\right | = \frac 1 3
\quad \text{and}\quad C_1 > \frac 1 3 \quad \text{for}\,\, 0<\varepsilon_1<1.
\end{equation*}
Thus formula~(\ref{7}) is fulfilled for all $k\in\mathbb N$.

Now we prove estimate~(\ref{8}):
\begin{equation*}
\begin{split}
\left |
\frac{v_k(x)}{\min(x,1-x)}\right |
&= \left |
\sum_{p=1}^{\infty}
\sqrt 2\frac{\sqrt 2(-1)^p}{(p\pi)^3}
\bigg( 1-\frac 1{(p\pi)^2}\bigg)^{k-1}
\frac{\sin(p\pi x)}{\min(x,1-x)}
\right|\\
&\leqslant \sum_{p=1}^{\infty}\frac 2{(p\pi)^3}
\bigg(1-\frac 1{(p\pi)^2} \bigg)^{k-1}\frac{\vert\sin(p\pi x) \vert}{\min(x,1-x)}
\leqslant \sum_{p=1}^{\infty}\frac 2{(p\pi)^2} =\frac 1 3,\quad k=1,2,\ldots.
\end{split}
\end{equation*}
This completes the proof.
\end{proof}

In the following two lemmas we estimate  $\|y_k\|$.
\begin{lemma}\label{lem5}
Let $\sigma>0$,\,\,$0<\varepsilon_2<\min(1,\sigma)$,\,
$k>\sigma-\varepsilon_2$,\, $u_1\in D(A^{\sigma})$.
Then the following inequality holds true:
\begin{equation}\label{10}
\|y_k\| \leqslant\frac{C_2}{k^{(\sigma-\varepsilon_2)/2}}
\|A^{\sigma}u_1\|
\end{equation}
with $C_2=\frac{L}{\sin(\pi\varepsilon_2)}(\sigma-\varepsilon_2)^{(\sigma-\varepsilon_2)/2}$.
\end{lemma}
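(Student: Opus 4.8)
The plan is to represent $y_k = [(I+A)^{-1}A]^k u_1$ using the operator calculus attached to the sector $\Sigma$, extract the factor $A^{-\sigma}$ on the right (so that $\|A^\sigma u_1\|$ appears), and bound the remaining operator-valued integral by a scalar maximum of the type handled in Lemma \ref{lem2}. Concretely, since $u_1\in D(A^\sigma)$ we write $y_k = \big[(I+A)^{-1}A\big]^k A^{-\sigma} \, A^\sigma u_1$, so it suffices to estimate $\big\|\big[(I+A)^{-1}A\big]^k A^{-\sigma}\big\|$. Using the Dunford--Riesz representation along a contour $\Gamma$ surrounding $\Sigma(A)$ inside $\rho(A)$, and the decay estimate (\ref{3}) for the resolvent, this operator norm is controlled by
\begin{equation*}
\frac{1}{2\pi}\int_{\Gamma} \left(\frac{|z|}{|z+1|}\right)^k |z|^{-\sigma}\,\frac{L}{1+|z|}\,|dz|.
\end{equation*}

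The next step is to reduce the contour integral to a one-dimensional maximization. Parametrizing $\Gamma$ by rays $z = re^{\pm i\psi}$ (with $\psi$ slightly larger than $\varphi$, and closing off near the origin by the arc $|z|=\gamma$), the quantity $|z|/|z+1|$ is bounded by a fixed multiple of $r/(r+1)$ along these rays, and the factor $L/(1+|z|)$ together with the arc-length element is integrable in $r$ provided we keep one spare power of $r$ in reserve. This is exactly where the parameter $\varepsilon_2$ enters: I split $|z|^{-\sigma} = |z|^{-(\sigma-\varepsilon_2)}\cdot|z|^{-\varepsilon_2}$, pull the scalar maximum
\begin{equation*}
\max_{r>0}\left[\left(\frac{r}{r+1}\right)^k r^{-(\sigma-\varepsilon_2)}\right] \leqslant (\sigma-\varepsilon_2)^{(\sigma-\varepsilon_2)/2}\,k^{-(\sigma-\varepsilon_2)/2}
\end{equation*}
out of the integral by Lemma \ref{lem2} (valid since $k>\sigma-\varepsilon_2$), and the leftover $\int |z|^{-\varepsilon_2}\,|dz|/(1+|z|)$ is a convergent integral over $\Gamma$ whose value, after the standard computation, produces the factor $L/\sin(\pi\varepsilon_2)$ — the familiar constant arising from $\int_0^\infty t^{-\varepsilon_2}(1+t)^{-1}\,dt = \pi/\sin(\pi\varepsilon_2)$. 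Collecting these pieces yields (\ref{10}) with $C_2 = \frac{L}{\sin(\pi\varepsilon_2)}(\sigma-\varepsilon_2)^{(\sigma-\varepsilon_2)/2}$.

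The main obstacle is the bookkeeping at the corner where the two rays meet the arc $|z|=\gamma$ near the origin: one must check that the contributions of that bounded arc do not spoil either the $k^{-(\sigma-\varepsilon_2)/2}$ decay or the clean constant. On the arc, $|z|/|z+1|$ is bounded away from $1$ by a constant $q<1$ depending only on $\gamma$, so that portion contributes at most $O(q^k)$, which is dominated by the polynomial bound for all $k$ (after possibly enlarging $C_2$, or by noting that the stated inequality is what survives the worst ray contribution); the cleanest route is to deform $\Gamma$ so that it passes through the origin is avoided yet the ray estimate alone controls everything, exactly as in the scalar model of Lemma \ref{lem2}. A secondary point to verify is that $[(I+A)^{-1}A]^k A^{-\sigma}$ is genuinely given by the claimed Dunford integral — this uses that $A$ is strongly positive (hence $0\in\rho(A)$ by (\ref{2})) so that $A^{-\sigma}$ is bounded and the integrand is holomorphic and decays fast enough at infinity for the contour to be admissible.
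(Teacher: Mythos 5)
Your overall strategy coincides with the paper's: represent $y_k=[(I+A)^{-1}A]^k u_1$ by a Dunford integral over the boundary of the sector, insert $z^{-\sigma}$ against $A^{\sigma}u_1$, use the resolvent bound (\ref{3}), split $|z|^{-\sigma}=|z|^{-(\sigma-\varepsilon_2)}\,|z|^{-\varepsilon_2}$, pull a scalar maximum out via Lemma~\ref{lem2}, and evaluate the leftover integral as $\pi/\sin(\pi\varepsilon_2)$. (The paper simply integrates over the two rays $z=\rho e^{\pm i\varphi}$ through the origin, so the ``corner'' bookkeeping you flag as the main obstacle does not arise: near $z=0$ the integrand behaves like $\rho^{\,k-\sigma}$, which is integrable because $k>\sigma-\varepsilon_2>\sigma-1$.)

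The one step that would fail as written is your pointwise bound on $|z/(1+z)|$ along the rays. For $z=\rho e^{\pm i\varphi}$ with $\varphi\neq 0$ one has $|1+z|=\sqrt{1+2\rho\cos\varphi+\rho^2}<1+\rho$, so any inequality $|z|/|1+z|\leqslant C\,\rho/(1+\rho)$ forces $C>1$, and raised to the $k$-th power this yields a factor $C^{k}$ that destroys the polynomial decay. (Conversely, if $C=1$ were admissible, Lemma~\ref{lem2} with $n=k$, $\alpha=\sigma-\varepsilon_2$ would give decay $k^{-(\sigma-\varepsilon_2)}$, twice the rate the lemma actually claims --- a sign that this bound cannot be the right one.) The paper instead uses
\begin{equation*}
\left|\frac{z}{1+z}\right|^2=\frac{\rho^2}{1+2\rho\cos\varphi+\rho^2}\leqslant\frac{\rho^2}{1+\rho^2},
\end{equation*}
with no constant, and applies Lemma~\ref{lem2} to $\bigl(\tfrac{t}{1+t}\bigr)^{k/2}t^{-(\sigma-\varepsilon_2)/2}$ with $t=\rho^2$, i.e.\ with $n=k/2$ and $\alpha=(\sigma-\varepsilon_2)/2$; this is precisely where the halved exponent $(\sigma-\varepsilon_2)/2$ in (\ref{10}) and the constant $(\sigma-\varepsilon_2)^{(\sigma-\varepsilon_2)/2}$ come from. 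Your displayed maximum already carries these halved exponents, so your conclusion is the right one, but the justification must go through $\rho/\sqrt{1+\rho^2}$ rather than a ``fixed multiple of $\rho/(1+\rho)$.''
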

\begin{proof}
For estimating $\|y_k\|$, we use~({\ref{5}}) and apply integration along the path
$\Gamma=\Gamma_{+}\cup \Gamma_{-}$
consisting of two rays on the complex plane:
\begin{equation}\label{11}
\Gamma_{\pm}=\{ z\in \mathbb C\mid z=\rho e^{\pm i\varphi },\,\rho\in[0,\,+\infty)\}.
\end{equation}
Then we have
\begin{equation*}\label{12}
\begin{split}
\|y_k\|
&= \left \| [(I+A)^{-1}A]^k u_1\right\|
=\left \|
\frac 1{2\pi i}\int\limits_{\Gamma}\Big( \frac{z}{1+z}\Big)^k z^{-\sigma}
(zI-A)^{-1} A^{\sigma}u_1 dz \right \|\\
&\leqslant \frac 1{2\pi}
\int\limits_{\Gamma}\left| \frac z{1+z}\right|^k  \vert z\vert^{-\sigma}
\frac L {1+|z|} \vert dz\vert\, \|A^{\sigma} u_1\|.
\end{split}
\end{equation*}

Making use of the relations
\begin{gather*}
|z|= |\rho e^{\pm i \varphi}| =\rho,\quad
|dz| = |d\, (\rho e^{\pm i\varphi})| = |e^{\pm i \varphi} d\rho| = d\rho,\\
\left| \frac z {1+z}\right |^2
= \left | \frac{\rho e^{\pm i\varphi}}{1+\rho e^{\pm i\varphi}} \right |^2
=\frac{\rho^2}{1+2\rho \cos \varphi +\rho^2}
\leqslant \frac{\rho^2}{1+\rho^2},
\end{gather*}
we get
\begin{equation*}
\begin{split}
\|y_k\|
&\leq \frac L {\pi}
\int\limits_0^{+\infty}
\left( \frac{\rho^2}{1+\rho^2} \right)^{k/2}
\frac{\rho^{-\sigma}}{1+\rho}d\rho \,\|A^{\sigma}u_1\|\\
&\leqslant\frac L{\pi}\int\limits_0^{+\infty}
\left( \frac{\rho^2}{1+\rho^2}\right)^{k/2}
(\rho^2)^{-(\sigma-\varepsilon_2)/2}
\frac{d\rho}{\rho^{\varepsilon_2}(1+\rho)} \,\|A^{\sigma}u_1\|\\
&\leqslant \frac L{\pi}
\sup_{t>0}\left[ \left(\frac t{1+t} \right)^{k/2}t^{-(\sigma-\varepsilon_2)/2}\right]
\frac{\pi}{\sin(\pi\varepsilon_2)}\|A^{\sigma}u_1\|.
\end{split}
\end{equation*}
Applying lemma~\ref{lem2} with
$n=\frac k 2$ and $\alpha =\frac{\sigma-\varepsilon_2}{2}$, we obtain the inequality
\begin{equation*}
\|y_k\|
\leqslant \frac{L}{\pi}
\left(\frac{\sigma-\varepsilon_2}{k}\right)^{(\sigma-\varepsilon_2)/2}
\frac{\pi}{\sin{(\pi\varepsilon_2)}}
\|A^{\sigma}u_1\|,
\end{equation*}
and therefore (\ref{10}) is proven.
\end{proof}

\begin{lemma}\label{lem6}
Let $u_1 \in E$ satisfy the condition $u_1\in D(A^k)\,\, \forall k \in \mathbb N$.
Then the following estimate holds true:
\begin{equation}\label{13}
\|y_k\| \leqslant \frac{L e}{\sqrt{2}}\,e^{-2\sqrt{k}}\,
\frac{(\sqrt{k}+1)^2}{\sqrt{k}}    \|u_1\|_{C(A, \,(1), \,\nu)},
\quad \nu = \frac {\cos\varphi}{L+1}.
\end{equation}
\end{lemma}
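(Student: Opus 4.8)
The plan is to imitate the contour–integral argument of Lemma~\ref{lem5}, replacing the fractional power $A^\sigma$ by an integer power $A^j$ tuned to $k$, and then to optimize over $j$. Starting from~(\ref{5}), for any $j\in\mathbb N$ we may write
\begin{equation*}
y_k=[(I+A)^{-1}A]^k u_1
=\frac{1}{2\pi i}\int\limits_{\Gamma}\Big(\frac{z}{1+z}\Big)^k\Big(\frac{z}{1+z/j}\Big)^{-j}\Big(I+\frac{z}{j}\Big)^{-j}\,z^{j}(zI-A)^{-1}A^{j}u_1\,dz,
\end{equation*}
so that, after estimating $\|(zI-A)^{-1}\|\le L/(1+|z|)$ and $\|A^ju_1\|\le \nu^j\|u_1\|_{C(A,(1),\nu)}$ (recalling $m_n\equiv1$), we are left with a scalar integral over the rays $\Gamma_\pm$ of~(\ref{11}). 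The point of inserting the factor $(1+z/j)^{-j}(z/j)^{j}\cdot j^{-j}$ is that on $\Gamma$ one has $|1+z/j|\ge 1+(\rho/j)\cos\varphi$, while $|z/(1+z)|^2\le \rho^2/(1+\rho^2)$ as in Lemma~\ref{lem5}; after the substitution $t=\rho^2$ (or working directly in $\rho$) the integrand becomes, up to the constant $L\nu^j/(\pi j^j)$, a product of the form $\big[\rho/((1+\rho/j)(1+\rho))\big]^{k}$ times an integrable tail.

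Next I would extract the supremum of that bracket using Lemma~\ref{lem3}: since
\begin{equation*}
\max_{\rho\ge0}\left[\frac{\rho}{\big(\tfrac{\rho}{k}+1\big)(\rho+1)}\right]^{k}\le e\,e^{-2\sqrt k},
\end{equation*}
the natural choice is $j=k$, which makes the denominator factor $(1+\rho/k)$ match exactly the shape in Lemma~\ref{lem3}. Pulling this maximum out of the integral leaves a convergent integral whose value is $O(1)$ (one checks $\int_0^\infty \rho^{k}\,[(1+\rho/k)(1+\rho)]^{-k}\,d\rho$ against, say, the same bound evaluated pointwise and a crude comparison, or simply bounds the remaining $\rho$-dependence by $(\sqrt k+1)^2/\sqrt k$ coming from evaluating $(1+\rho/k)(1+\rho)$ and the measure near the maximizer $\rho=\sqrt k$). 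With $\nu=\cos\varphi/(L+1)$, the factor $\nu^k$ together with the $k^{-k}$–type normalization and Lemma~\ref{lem0}, which controls $\|(I+A/k)^kA^{-k}\|\le(L+1)^k$, exactly cancels the $(L+1)^k$ that the bound on $(1+z/j)^{-j}$ would otherwise produce; this is what fixes the admissible $\nu$. Collecting the constants gives the claimed bound $\|y_k\|\le \tfrac{Le}{\sqrt2}\,e^{-2\sqrt k}\,\tfrac{(\sqrt k+1)^2}{\sqrt k}\,\|u_1\|_{C(A,(1),\nu)}$.

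The main obstacle, and the step deserving the most care, is the bookkeeping of the three competing exponential-in-$k$ factors: the gain $e^{-2\sqrt k}$ from Lemma~\ref{lem3}, the loss $(L+1)^k$ from bounding $\|(I+z/k)^{-k}\|$ along the rays (equivalently from Lemma~\ref{lem0}), and the normalization $\nu^k k^{\,\cdot}$. One must route the factor $z^k$ through $A^k$ in precisely the way that the Cayley-transform identity $(I+A)^{-1}A=\big[(I+A/k)(A^{-1})\big]^{-1}\cdot$(stuff) permits, so that Lemma~\ref{lem0} is applicable with the operator norm and no spurious growth survives; the specific value $\nu=\cos\varphi/(L+1)$ is what balances these, the $\cos\varphi$ entering through the lower bound $|1+z/k|^2\ge(1+(\rho/k)\cos\varphi)^2$ used on $\Gamma_\pm$. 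Once that cancellation is set up correctly, the remaining estimate of the residual integral by $(\sqrt k+1)^2/\sqrt k$ is routine, and the asymptotic sharpness of Lemma~\ref{lem3} shows the exponential rate $e^{-2\sqrt k}$ cannot be improved by this method.
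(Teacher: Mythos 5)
Your overall architecture is the same as the paper's: represent $y_k$ by a Dunford--Cauchy integral over $\Gamma$, insert the pair $\bigl(1+\tfrac{z}{k}\bigr)^{-k}$ / $\bigl(I+\tfrac{A}{k}\bigr)^{k}$, extract $\max_t\bigl[t/((1+t)(1+t/k))\bigr]^{k}$ via Lemma~\ref{lem3}, and cancel $(L+1)^k\cos^{-k}\varphi$ against $\nu^k$ using Lemma~\ref{lem0} and norm~(\ref{norm}). But two of your steps do not hold up as written. First, your displayed integral is not an identity for $y_k$: the scalar factors $\bigl(\tfrac{z}{1+z/j}\bigr)^{-j}\bigl(1+\tfrac{z}{j}\bigr)^{-j}z^{j}$ multiply out to $1$, so the integrand reduces to $\bigl(\tfrac{z}{1+z}\bigr)^{k}(zI-A)^{-1}A^{j}u_1$ and the integral represents $[(I+A)^{-1}A]^{k}A^{j}u_1$, not $y_k$. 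The coherent version (the paper's) keeps the scalar $\bigl(\tfrac{z}{1+z}\bigr)^{k}\bigl(1+\tfrac{z}{k}\bigr)^{-k}$ against the vector $\bigl(I+\tfrac{A}{k}\bigr)^{k}u_1$, and only afterwards factors $\bigl(I+\tfrac{A}{k}\bigr)^{k}u_1=\bigl(I+\tfrac{A}{k}\bigr)^{k}A^{-k}\,A^{k}u_1$ so that Lemma~\ref{lem0} and $\|A^{k}u_1\|\leqslant\nu^{k}\|u_1\|_{C(A,(1),\nu)}$ can be applied.

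Second, the bound $|z/(1+z)|^2\leqslant\rho^2/(1+\rho^2)$ cannot produce the bracket $\bigl[\rho/((1+\rho/k)(1+\rho))\bigr]^{k}$: since $\sqrt{1+\rho^2}\leqslant 1+\rho$, replacing $\sqrt{1+\rho^2}$ by $1+\rho$ in a denominator goes the wrong way, and the generic comparison $\sqrt{1+\rho^2}\geqslant(1+\rho)/\sqrt{2}$ costs a factor $2^{k/2}$ that swamps $e^{-2\sqrt{k}}$. What works is $|1+z|=\sqrt{1+2\rho\cos\varphi+\rho^2}\geqslant 1+\rho\cos\varphi$ together with $|1+z/k|\geqslant 1+(\rho/k)\cos\varphi$; the substitution $t=\rho\cos\varphi$ then yields exactly the Lemma~\ref{lem3} bracket and is the source of the full $\cos^{-k}\varphi$ (you attribute the $\cos\varphi$ only to the $|1+z/k|$ factor). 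Finally, the factor $(\sqrt{k}+1)^2/\sqrt{k}$ is not just ``measure near the maximizer'': in the paper it arises by splitting off one copy of the bracket, bounding that copy by $\sqrt{k/(2\rho)}$ via $|1+z/k|\geqslant\sqrt{2\rho/k}$ so that $\int_0^{\infty}\rho^{-1/2}(1+\rho)^{-1}\,d\rho=\pi$ converges, and applying Lemma~\ref{lem3} to the remaining $(k-1)$-st power. With these three repairs your argument coincides with the paper's proof.
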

\begin{proof}
Taking  representation~(\ref{5}) into consideration and integrating along the path
$\Gamma=\Gamma_+ \bigcup \Gamma_-$ (see~(\ref{11})), we have
\begin{equation*}
\begin{split}
\|y_k\| &=\left \| [(I+A)^{-1}A]^k u_1  \right\|\\
&= \Bigg\|
\frac{1}{2\pi i}\int\limits_{\Gamma}
\left ( \frac{z}{1+z}\right)^k\left ( 1+\frac z k\right)^{-k}
(zI-A)^{-1}\left( I+\frac A k\right)^k u_1 dz\Bigg\|\\
&\leqslant \frac 1{2\pi} \int\limits_{\Gamma}
\left| \frac z{1+z}\right|^k \left | 1+\frac z k\right|^{-k} \frac L{1+|z|} |dz|\,
\bigg\| \left( I +\frac A k\right)^k u_1\bigg\|\\
&= \frac L\pi \int\limits_0^{+\infty}
\left[\frac{\rho\cos\varphi}
{\sqrt{1+2\rho\cos\varphi+\rho^2}
\sqrt{1+2\frac \rho k\cos\varphi+\frac{\rho^2}{k^2}}}
\right]^{k-1}\\
&\times
\frac{\rho\cos\varphi}
{\sqrt{1+2\rho\cos\varphi+\rho^2}
\sqrt{1+2\frac \rho k\cos\varphi+\frac{\rho^2}{k^2}}}
\frac{d\rho}{1+\rho}\cos^{-k}\varphi
\bigg\| \left(I+\frac A k\right)^k A^{-k} A^{k}u_1\bigg\|
\end{split}
\end{equation*}
Next we make use of the relations
\begin{gather*}
\sqrt{1+2\rho \cos \varphi +\rho^2}
\geqslant 1+\rho\cos\varphi, \quad
\sqrt{1+2\frac {\rho}{k}\cos\varphi+\frac{\rho^2}{k^2}}
\geq 1+\frac{\rho}{k}\cos\varphi,\\
\sqrt{1+2\frac {\rho}{k}\cos\varphi+\frac{\rho^2}{k^2}}
\geqslant \sqrt{1+\frac{\rho^2}{k^2}}
\geqslant \sqrt{\frac{2\rho}{k}},\quad
|dz| = |d\, (\rho e^{\pm i\varphi})| = |e^{\pm i \varphi} d\rho| = d\rho
\end{gather*}
and lemma~\ref{lem0}. 
Then
\begin{equation*}
\begin{split}
\|y_k\|
&\leqslant \frac{L\sqrt{k}}{\pi \sqrt{2}}\int\limits_0^{+\infty}
\left[\frac{\rho\cos\varphi}{\left(1+\rho\cos\varphi\right)
\left(1+\frac{\rho\cos\varphi}{k}\right)} \right]^{k-1}\!\!
\frac{d\rho}{\sqrt{\rho}(1+\rho)} \cos^{-k}\varphi
\bigg\|\left(I+\frac A k\right)^k A^{-k}\bigg\| \|A^ku_1\|\\
&\leqslant \frac{L\sqrt{k}}{\pi \sqrt{2}}
\max_{t\geqslant 0}\left[ \frac{t}{(1+t)\big(1+\frac{t}{k}\big)}\right]^{k-1}
\int\limits_0^{+\infty}\frac{d\rho}{\sqrt{\rho}(1+\rho)}
\cos^{-k}\varphi(L+1)^k \|A^k u_1\|\\
&=\frac{L\sqrt{k}}{\sqrt{2}}
\left[ \frac{\sqrt{k}}{(1+\sqrt{k})\big(1+\frac{1}{\sqrt k}\big)}\right]^{k}
\left(\frac{\sqrt{k}+1}{\sqrt{k}}\right)^{2} \frac{\|A^k u_1\|}{\nu^k}.
\end{split}
\end{equation*}
Applying here lemma~\ref{lem3} and using norm~(\ref{norm}),
we obtain estimate~(\ref{13}).
\end{proof}

We approximate the exact solution $u(x)$ of BVP~(\ref{1})
by the partial sum of series~(\ref{4}):
\begin{equation}\label{14}
\begin{split}
u_N (x)=\sum\limits_{k=0}^{N} v_k(x)y_k.
\end{split}
\end{equation}
In the next two theorems, we study the error $u(x)-u_N(x)$
under various assumptions about smoothness of the vector $u_1$.
\begin{theorem}\label{th1}
Let  $u_1\in D(A^{\sigma})$, \,$\sigma > 1$.
Then the accuracy of the approximate solution~(\ref{14})
is characterized by the weighted estimate
\begin{equation}\label{15}
\begin{split}
\left\| \frac{u(x)-u_N(x)}{\min(x,1-x)}\right\|
\leqslant \frac{C}{N^{(\sigma-1-\varepsilon)/2}}
\|A^{\sigma}u_1\|,\quad x\in[0,\,1] \quad (N\geqslant \sigma-1),
\end{split}
\end{equation}
where $\varepsilon >0$ is an arbitrary small  number
and \,$C>0$ is a constant independent of  $N$. 
\end{theorem}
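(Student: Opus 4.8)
The plan is to reduce the estimate to the tail of the series~(\ref{4}) and then apply the pointwise bounds from Lemmas~\ref{lem4} and~\ref{lem5} term by term. Since $u(x)=\sum_{k=0}^{\infty} v_k(x)y_k$ by~(\ref{4}) and $u_N(x)$ is its partial sum~(\ref{14}), we have $u(x)-u_N(x)=\sum_{k=N+1}^{\infty}v_k(x)y_k$, whence
\begin{equation*}
\left\|\frac{u(x)-u_N(x)}{\min(x,1-x)}\right\|
\leqslant \sum_{k=N+1}^{\infty}\left|\frac{v_k(x)}{\min(x,1-x)}\right|\,\|y_k\|.
\end{equation*}
The series on the right will be shown to converge, which a posteriori also justifies the termwise manipulation.

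Next I would fix the two spare exponents. Take an arbitrarily small $\varepsilon$ with $0<\varepsilon<\min(1,\sigma-1)$ and put $\varepsilon_1=\varepsilon_2=\varepsilon/2$, so that $\varepsilon_1\in(0,1)$ and $\varepsilon_2\in(0,\min(1,\sigma))$, as required by Lemmas~\ref{lem4} and~\ref{lem5} respectively. Because $N\geqslant\sigma-1$, every index occurring in the tail satisfies $k\geqslant N+1\geqslant\sigma>\sigma-\varepsilon_2$, so estimate~(\ref{10}) is applicable to all of them. Inserting~(\ref{7}) for the weighted polynomial factor and~(\ref{10}) for $\|y_k\|$,
\begin{equation*}
\left\|\frac{u(x)-u_N(x)}{\min(x,1-x)}\right\|
\leqslant C_1C_2\,\|A^{\sigma}u_1\|\sum_{k=N+1}^{\infty}\frac{1}{k^{(1-\varepsilon_1)/2}\,k^{(\sigma-\varepsilon_2)/2}},
\end{equation*}
and since $\tfrac{1-\varepsilon_1}{2}+\tfrac{\sigma-\varepsilon_2}{2}=\tfrac{\sigma+1-\varepsilon}{2}$, the sum on the right is $\sum_{k=N+1}^{\infty}k^{-(\sigma+1-\varepsilon)/2}$.

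It remains to estimate the tail of this $p$-series. Since $\varepsilon<\sigma-1$, the exponent $(\sigma+1-\varepsilon)/2$ exceeds $1$, so comparison with $\int_{N}^{\infty}t^{-(\sigma+1-\varepsilon)/2}\,dt$ gives $\sum_{k=N+1}^{\infty}k^{-(\sigma+1-\varepsilon)/2}\leqslant \frac{2}{\sigma-1-\varepsilon}\,N^{-(\sigma-1-\varepsilon)/2}$, which yields~(\ref{15}) with $C=\frac{2C_1C_2}{\sigma-1-\varepsilon}$.

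I do not anticipate a genuine obstacle here; the substantive work is already done in the preparatory lemmas. The one point that needs care is the allocation of the loss $\varepsilon$: half of it is spent in Lemma~\ref{lem4} to gain simultaneously the $\min(x,1-x)$ weight and the extra decay $k^{-(1-\varepsilon_1)/2}$, the other half in Lemma~\ref{lem5} to turn $D(A^{\sigma})$-regularity of $u_1$ into the decay $k^{-(\sigma-\varepsilon_2)/2}$; only then does the sum of the two exponents exceed $1$ and make the tail summable, which is exactly the place where the hypothesis $\sigma>1$ is used. Replacing~(\ref{7}) by the cruder bound~(\ref{8}) would force $\sigma>2$ and still not improve the rate, so the weighted inequality~(\ref{7}) is indispensable both for the $\min(x,1-x)$ factor on the left and for the exponent $(\sigma-1-\varepsilon)/2$ on the right.
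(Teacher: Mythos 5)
Your proposal is correct and follows essentially the same route as the paper's own proof: decompose the error as the tail $\sum_{k=N+1}^{\infty}v_k(x)y_k$, apply estimate~(\ref{7}) of Lemma~\ref{lem4} and estimate~(\ref{10}) of Lemma~\ref{lem5} termwise, and bound the resulting $p$-series tail by the integral $\int_N^{\infty}x^{-(1+\sigma-\varepsilon_1-\varepsilon_2)/2}dx$. The only cosmetic difference is that you fix $\varepsilon_1=\varepsilon_2=\varepsilon/2$ explicitly, whereas the paper keeps $\varepsilon_1,\varepsilon_2$ free subject to $1+\varepsilon_1+\varepsilon_2<\sigma$ and sets $\varepsilon=\varepsilon_1+\varepsilon_2$ at the end.
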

\begin{proof}
For
$0<\varepsilon_1<1$,
$0<\varepsilon_2<1$,
$1+\varepsilon_1+\varepsilon_2<\sigma$ and $N+1>\sigma-\varepsilon_2$,
the assumptions of lemma~\ref{lem4} and lemma~\ref{lem5} are fulfilled.
Then we get
\begin{equation*}
\begin{split}
\left \| \frac{u(x)-u_N(x)}{\min(x,1-x)} \right\|
&=\left\| \sum\limits_{k=N+1}^{\infty}
\frac{v_k(x)}{\min(x,1-x)}y_k\right\|
\leqslant \sum\limits_{k=N+1}^{\infty}
\left|\frac{v_k(x)}{\min(x,1-x)} \right|\|y_k\|\\
&\leqslant \sum\limits_{k=N+1}^{\infty}
\frac{C_1}{k^{(1-\varepsilon_1)/2}}
\frac{C_2}{k^{(\sigma-\varepsilon_2)/2}} \|A^{\sigma}u_1\|
\leqslant C_1C_2\int\limits_{N}^{+\infty}
\frac {dx}{x^{(1+\sigma-\varepsilon_1-\varepsilon_2)/2}}
\|A^{\sigma}u_1\|\\
&=\frac{2C_1C_2}{\sigma-1-\varepsilon_1-\varepsilon_2}
\frac{1}{N^{(\sigma-1-\varepsilon_1-\varepsilon_2)/2}}
\|A^{\sigma}u_1\|,
\end{split}
\end{equation*}
from where estimate~(\ref{15}) easily follows.
\end{proof}
\begin{theorem}\label{th2}
Let $u_1\in C(A,\,(1),\,\nu)$ with $\nu=\frac{\cos\varphi}{L+1}$.
Then the accuracy of the approximate solution~(\ref{14})
is characterized by the weighted estimate
\begin{equation}\label{16}
\left\| \frac{u(x)-u_N(x)}{\min(x,1-x)}\right\|\leqslant
\frac{C e^{-\sqrt{N+1}}}{(N+1)^{1/2-\varepsilon}}\|u_1\|_{C(A,\,(1),\,\nu)},
\quad x\in[0,\,1] \quad (N\in\mathbb N),
\end{equation}
where $\varepsilon >0$ is an arbitrary small  number and
$C>0$ is a constant independent of  $N$. 
\end{theorem}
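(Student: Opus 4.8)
The plan is to follow the scheme of the proof of Theorem~\ref{th1}, but now to estimate the tail of series~(\ref{4}) by combining the weighted bound~(\ref{7}) for $v_k(x)$ from Lemma~\ref{lem4} with the \emph{exponential} bound~(\ref{13}) for $\|y_k\|$ from Lemma~\ref{lem6}; the hypothesis $\nu=\cos\varphi/(L+1)$ is exactly the value of $\nu$ occurring in~(\ref{13}). First I would note that $C(A,(1),\nu)\subseteq C^{\infty}(A)=\bigcap_n D(A^n)\subseteq D(A^{\sigma})$ for every $\sigma>1$, so that $u_1\in D(A^k)$ for all $k\in\mathbb N$, the representation~(\ref{4}) is legitimate, and $u(x)-u_N(x)=\sum_{k=N+1}^{\infty}v_k(x)y_k$, the series converging absolutely in $E$ in view of the bounds to follow. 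Fixing any $\varepsilon_1\in(0,1)$, for each $x\in[0,1]$ and each $N\in\mathbb N$ we then get
\begin{equation*}
\left\|\frac{u(x)-u_N(x)}{\min(x,1-x)}\right\|
\le\sum_{k=N+1}^{\infty}\left|\frac{v_k(x)}{\min(x,1-x)}\right|\,\|y_k\|
\le\frac{C_1Le}{\sqrt 2}\,\|u_1\|_{C(A,(1),\nu)}\sum_{k=N+1}^{\infty}\frac{1}{k^{(1-\varepsilon_1)/2}}\,\frac{(\sqrt k+1)^2}{\sqrt k}\,e^{-2\sqrt k}.
\end{equation*}

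Next, since $(\sqrt k+1)^2\le 4k$ for $k\ge 1$, the general term is bounded by $4\,k^{\varepsilon_1/2}e^{-2\sqrt k}$, so it remains to estimate $S_N:=\sum_{k=N+1}^{\infty}k^{\varepsilon_1/2}e^{-2\sqrt k}$. The function $x\mapsto x^{\varepsilon_1/2}e^{-2\sqrt x}$ is decreasing on $[1,\infty)$ (its logarithmic derivative $\tfrac{\varepsilon_1}{2x}-\tfrac1{\sqrt x}$ is negative there), hence
\begin{equation*}
S_N\le\int_{N}^{\infty}x^{\varepsilon_1/2}e^{-2\sqrt x}\,dx
=2\int_{\sqrt N}^{\infty}t^{\varepsilon_1+1}e^{-2t}\,dt
\le C_{\varepsilon_1}\,N^{(\varepsilon_1+1)/2}e^{-2\sqrt N},
\end{equation*}
where the substitution $x=t^2$ is used, then $t=\sqrt N+s$ together with $(\sqrt N+s)^{\varepsilon_1+1}\le N^{(\varepsilon_1+1)/2}(1+s)^{\varepsilon_1+1}$ for $N\ge 1$, and $C_{\varepsilon_1}=2\int_0^{\infty}(1+s)^{\varepsilon_1+1}e^{-2s}\,ds<\infty$. (The isolated value $N=0$, if admitted, is absorbed into the constant since the full series $\sum_{k\ge1}k^{\varepsilon_1/2}e^{-2\sqrt k}$ converges.)

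Finally I would convert $N^{(\varepsilon_1+1)/2}e^{-2\sqrt N}$ into the target form. Writing $e^{-2\sqrt N}=e^{-\sqrt{N+1}}\,e^{\sqrt{N+1}-2\sqrt N}$ and noting $\sqrt{N+1}-2\sqrt N=-\sqrt N+O(1/\sqrt N)\to-\infty$, the quantity $N^{(\varepsilon_1+1)/2}(N+1)^{1/2-\varepsilon}e^{\sqrt{N+1}-2\sqrt N}$ tends to $0$, hence is bounded by a constant $M_{\varepsilon}$ over all $N\in\mathbb N$; therefore $S_N\le C_{\varepsilon_1}M_{\varepsilon}\,e^{-\sqrt{N+1}}(N+1)^{-1/2+\varepsilon}$, and collecting the prefactors yields~(\ref{16}) with $C=\tfrac{4C_1Le}{\sqrt 2}C_{\varepsilon_1}M_{\varepsilon}$ independent of $N$. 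The only point requiring care is this last step: one must genuinely use the $e^{-2\sqrt N}$ decay from Lemma~\ref{lem6} (a single factor $e^{-\sqrt N}$ would \emph{not} absorb the polynomial factors against the prescribed $e^{-\sqrt{N+1}}$); the rest of the argument is routine comparison of a series with an integral.
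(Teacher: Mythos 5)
Your proof is correct and takes essentially the same route as the paper: represent the tail of series~(\ref{4}), combine the weighted bound~(\ref{7}) from Lemma~\ref{lem4} with the exponential bound~(\ref{13}) from Lemma~\ref{lem6}, and sum over $k\geqslant N+1$. The only difference is in the final bookkeeping --- the paper splits $e^{-2\sqrt k}=e^{-\sqrt k}e^{-\sqrt k}$, pulls out $e^{-\sqrt{N+1}}(N+1)^{-(1-\varepsilon_1)/2}$ and bounds what remains by the convergent constant $S$, whereas you use an integral comparison and then absorb the polynomial factors asymptotically; both are sound.
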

\begin{proof}
Applying lemma~\ref{lem4} and lemma~\ref{lem6}, we have
\begin{equation}\label{17}
\begin{split}
\left \| \frac{u(x)-u_N(x)}{\min(x,1-x)} \right\|
&=\left\| \sum\limits_{k=N+1}^{\infty}
\frac{v_k(x)}{\min(x,1-x)}y_k\right\|
\leqslant \sum\limits_{k=N+1}^{\infty}
\left|\frac{v_k(x)}{\min(x,1-x)} \right|\|y_k\|\\
&\leqslant \sum\limits_{k=N+1}^{\infty}
\frac{C_1}{k^{(1-\varepsilon_1)/2}}
\frac{L e}{\sqrt{2}}\,e^{-2\sqrt{k}}\,
\frac{(\sqrt{k}+1)^2}{\sqrt{k}}\|u_1\|_{C(A, \,(1), \,\nu)}\\
&\leqslant \frac{C_1 L e}{\sqrt{2}}
\frac{e^{-\sqrt{N+1}}}{(N+1)^{(1-\varepsilon_1)/2}}
\sum\limits_{k=1}^{\infty}
e^{-\sqrt{k}}\, \frac{(\sqrt{k}+1)^2}{\sqrt{k}}
\|u_1\|_{C(A,\,(1),\,\nu)}.
\end{split}
\end{equation}
Denoting by $S$ the sum of the convergent number series:
$S=\sum\limits_{k=1}^{\infty}
e^{-\sqrt{k}}\, \frac{(\sqrt{k}+1)^2}{\sqrt{k}}=8{.}152349342\ldots$,
and putting $C=\frac{C_1 L Se}{\sqrt{2}}$,
we arrive at inequality~(\ref{16}).
\end{proof}

\section{BVP for the inhomogeneous equation}\label{sec5}
In this section, we consider the BVP
\begin{equation}\label{18}
\begin{split}
\frac{d^2u(x)}{dx^2} &-Au(x)=-f(x),\quad x\in(0,1),\\
u(0)&=0,\quad u(1)=0,
\end{split}
\end{equation}
with the operator $A$ satisfying the same conditions
as in Section~\ref{sec2}.

To write down the solution $u(x)$ in a convenient way,
we make use of the Fourier series representation
for the right-hand side $f(x)$:
\begin{equation}\label{19}
f(x)=\sum_{k=1}^{\infty}\sqrt{2}\sin(2k\pi x)f_{s,k}+f_0+
\sum\limits_{k=1}^{\infty}\cos(2k\pi x)f_{c,k}
\end{equation}
with
\begin{equation}\label{20}
\begin{split}
f_0&=\int\limits_0^1 f(x)dx,\\
f_{s,k}&=\int\limits_0^1 f(x)\sqrt{2}\sin(2k\pi x)dx,\quad
f_{c,k}=\int\limits_0^1 f(x)\sqrt{2}\cos(2k\pi x)dx, \,\, k=1,2,\ldots.
\end{split}
\end{equation}
Applying the operator Green function
\begin{equation*}
G(x,\xi; A)=(\sqrt{A}\sinh\sqrt{A})^{-1}
\begin{cases}
\sinh(\sqrt{A}x)\sinh\big(\sqrt{A}(1-\xi)\big)     &\text{if \,$x\leqslant\xi$,}\\
\sinh(\sqrt{A}\xi)\sinh\big(\sqrt{A}(1-x)\big)     &\text{if \,$\xi\leqslant x$},
\end{cases}
\end{equation*}
we modify $u(x)$ as follows:
\begin{equation*}
\begin{split}
u(x)&= \int\limits_0^1 G(x,\xi;A)f(\xi)\,d\xi\\
&=\big(\sqrt{A}\sinh\sqrt{A}\,\big)^{-1}\sinh\big(\sqrt{A}(1-x)\big)
\int\limits_0^x \sinh(\sqrt{A}\xi)f(\xi)d\xi\\
&\quad+\big(\sqrt{A}\sinh\sqrt{A}\,\big)^{-1}\sinh(\sqrt{A}x)
\int\limits_x^1 \sinh\big(\sqrt{A}(1-\xi)\big)f(\xi)d\xi
\end{split}
\end{equation*}
\begin{equation}\label{21}
\begin{split}
&=\sum\limits_{k=1}^{\infty}
\sqrt{2}\sin(2k\pi x)\big[(2k\pi)^2I+A\big]^{-1}f_{s,k}\\
&\quad+ (A\sinh\sqrt{A})^{-1}
\Big[\sinh\sqrt{A} -\sinh\big(\sqrt{A}(1-x)\big)-\sinh(\sqrt{A}x)\Big]f_0\\
&\quad\sum\limits_{k=1}^{\infty}\sqrt{2}\big[(2k\pi)^2I+A\big]^{-1}\sinh^{-1}\sqrt{A}\\
&\quad \times
\Big[\cos(2k\pi x)\sinh\sqrt{A}-\sinh\big(\sqrt{A}(1-x)\big)-\sinh(\sqrt{A}x)  \Big]f_{c,k}.
\end{split}
\end{equation}
Bearing~(\ref{4}) in mind, we then rewrite~(\ref{21}) in the form
\begin{equation}\label{22}
\begin{split}
u(x)&=\sum_{k=1}^{\infty}\sqrt{2}\sin(2k\pi x)\big[ (2k\pi)^2I+A \big]^{-1}f_{s,k}\\
&\quad+A^{-1}
\bigg\{I-\sum_{j=0}^{\infty} \big[v_j(1-x)+v_j(x)\big]
\big[(I+A)^{-1}A\big]^j\bigg\}f_0
\end{split}
\end{equation}
\begin{equation*}
\begin{split}
&+\sum_{k=1}^{\infty}\sqrt{2}\big[(2k\pi)^2 I+A\big]^{-1}
\bigg\{\cos(2k\pi x)I
-\sum_{j=0}^{\infty}
\big[v_j(1-x)+v_j(x)\big]\big[(I+A)^{-1}A\big]^{j}\bigg\}f_{c,k}.
\end{split}
\end{equation*}
This representation yields the following approximation:
\begin{equation}\label{23}
\begin{split}
u_{N,M}(x)=&\sum_{k=1}^{N}\sqrt{2}\sin(2k\pi x)\big[ (2k\pi)^2I+A \big]^{-1}f_{s,k}\\
&\quad+A^{-1}\bigg\{I-\sum_{j=0}^{M} \big[v_j(1-x)+v_j(x)\big]
\big[(I+A)^{-1}A\big]^j\bigg\}f_0
\end{split}
\end{equation}
\begin{equation*}
\begin{split}
+\sum_{k=1}^{N}\sqrt{2}\big[(2k\pi)^2 I+A\big]^{-1}
\bigg\{\cos(2k\pi x)I
-\sum_{j=0}^{M}
\big[v_j(1-x)+v_j(x)\big]\big[(I+A)^{-1}A\big]^{j}\bigg\}f_{c,k}.
\end{split}
\end{equation*}
Next we study the accuracy of $u_{N,M}(x)$.
To this end, we introduce the error $u(x)-u_{N, M}(x)$
as the sum:
\begin{equation}\label{24}
u(x)-u_{N,M}(x)=\sum_{k=1}^{5}D_k,
\end{equation}
where
\begin{equation}\label{25}
\begin{split}
D_1&=\sum_{k=N+1}^{\infty}
\sqrt{2}\sin(2k\pi x) \big[(2k\pi)^2I+A\big]^{-1}f_{s,k},\\
D_2&=-\sum_{j=M+1}^{\infty}\big[v_j(1-x)+v_j(x)\big]A^{-1}\big[(I+A)^{-1}A\big]^j f_0,\\
D_3&=\sum_{k=N+1}^{\infty}\sqrt{2}\big[\cos(2k\pi x)-1\big]\big[(2k\pi )^2 I+A\big]^{-1}f_{c,k},\\
D_4&=-\sum_{k=1}^{\infty}\sqrt{2}\big[(2k\pi)^2I+A\big]^{-1}
\sum_{j=M+1}^{\infty}\big[v_j(1-x)+v_j(x)\big]\big[(I+A)^{-1}A\big]^jf_{c,k},\\
D_{5}&=-\sum_{k=N+1}^{\infty}\sqrt{2}\big[(2k\pi)^2I+A\big]^{-1}
\sum_{j=1}^{M}\big[v_j(1-x)+v_j(x)\big]\big[(I+A)^{-1}A\big]^jf_{c,k}.
\end{split}
\end{equation}

Now we prove some error estimates for approximation~(\ref{23})
under certain assumptions about smoothness of $f(x)$
in term of smoothness of the coefficients $f_0$, $f_{c,k}$, $f_{s,k}$ in~(\ref{19}).
\begin{theorem}\label{th3}
Let  the following conditions be fulfilled:
\begin{gather*}
\sigma >0,\quad
f_0\in D(A^{\sigma}),\quad f_{c,k}\in D(A^{\sigma})\quad\forall k\in\mathbb N,\\
\|f_s\|_{\sigma} \overset{\text{def}}{=}
\left\{\sum_{k=1}^{\infty}k^{\sigma +1}\|f_{s,k}\|^2\right\}^{1/2}<\infty,\quad\quad
\|f_c\|_{\sigma} \overset{\text{def}}{=}
\left\{\sum_{k=1}^{\infty}k^{\sigma +1}\|f_{c,k}\|^2\right\}^{1/2}<\infty,\\
\|f_c\|_{A^{\sigma}}\overset{\text{def}}{=}  \sum_{k=1}^{\infty}\|A^{\sigma}f_{c,k}\|<\infty.
\end{gather*}
Then the accuracy of the approximate solution~(\ref{23})
is characterized by the weighted estimate
\begin{equation}\label{26}
\begin{split}
\left\| \frac{u(x)-u_{N,N}(x)}{\min(x,1-x)}\right\|
&\leqslant \frac{C}{N^{(\sigma-\varepsilon)/2}}
\Big(\|f_s\|_{\sigma}+\|f_c\|_{\sigma}+\|A^{\sigma}f_0\|
+\|f_c\|_{A^{\sigma}}\Big),\\
& x\in[0,\,1] \quad (N\geqslant \sigma),
\end{split}
\end{equation}
where $\varepsilon >0$ is an arbitrary small  number and
$C>0$ is a constant independent of  $N$. 
\end{theorem}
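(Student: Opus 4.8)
The plan is to estimate each of the five pieces $D_1,\dots,D_5$ from~(\ref{25}) separately, setting $M=N$ throughout, and then sum the bounds. The recurring mechanism is the elementary operator bound $\|[(2k\pi)^2 I+A]^{-1}\|\leqslant C/(1+k^2)$, which follows from~(\ref{3}) with $z=-(2k\pi)^2$ (this $z$ lies in $\Sigma$), together with the weighted polynomial estimate~(\ref{7}) of Lemma~\ref{lem4} in the form $|v_j(1-x)+v_j(x)|\leqslant 2C_1\min(x,1-x)\,j^{-(1-\varepsilon_1)/2}$, and the Cayley-transform bound $\|A^{-1}[(I+A)^{-1}A]^j f_0\|=\|y_j\|$-type estimate from Lemma~\ref{lem5} applied with $f_0$ (resp. $f_{c,k}$) in place of $u_1$, i.e. $\|A^{-1}[(I+A)^{-1}A]^j g\|\leqslant (C/j^{(\sigma-\varepsilon_2)/2})\|A^{\sigma}g\|$ after writing $A^{-1}[(I+A)^{-1}A]^j = A^{-1-\sigma}[(I+A)^{-1}A]^j A^{\sigma}$ and running the contour-integral argument of Lemma~\ref{lem5}.

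First I would handle $D_1$ and $D_3$, which carry the Fourier-sine and cosine tails. For $D_1$, using $|\sin(2k\pi x)|\leqslant 2\pi k\min(x,1-x)$ (a standard sawtooth estimate) and the resolvent bound gives $\|D_1/\min(x,1-x)\|\leqslant C\sum_{k>N} k^{-1}\|f_{s,k}\|$; Cauchy--Schwarz against $\sum_{k>N}k^{-\sigma-3}$ converts this into $C N^{-(\sigma+1)/2}\|f_s\|_\sigma$, comfortably within~(\ref{26}). For $D_3$, the factor $|\cos(2k\pi x)-1|\leqslant C\min(x,1-x)\cdot 2\pi k$ plays the same role (it vanishes at the endpoints linearly after pulling out one power of $k$), and the same Cauchy--Schwarz step yields $C N^{-(\sigma+1)/2}\|f_c\|_\sigma$.

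Next come the double-sum terms $D_2,D_4,D_5$ where both the polynomial weight and the Cayley bound enter. For $D_2$: pull out $\min(x,1-x)$ from $v_j(1-x)+v_j(x)$, then bound $\|D_2/\min(x,1-x)\|\leqslant C\|A^\sigma f_0\|\sum_{j>N} j^{-(1-\varepsilon_1)/2}j^{-(\sigma-\varepsilon_2)/2}$, and compare the tail with an integral exactly as in the proof of Theorem~\ref{th1}; this gives $C N^{-(\sigma-\varepsilon)/2}\|A^\sigma f_0\|$ with $\varepsilon=\varepsilon_1+\varepsilon_2+1$-type slack absorbed by relabelling. For $D_5$, the $k$-sum runs over $k>N$ and the $j$-sum over $1\leqslant j\leqslant N$: use $\sum_{j=1}^N j^{-(1-\varepsilon_1)/2}\leqslant C N^{(1+\varepsilon_1)/2}$ together with the crude bound $\|[(I+A)^{-1}A]^j\|\leqslant (L+1)$ (or just $\leqslant C$, from~(\ref{3})), the resolvent bound $C/(1+k^2)$, and $|v_j(1-x)+v_j(x)|\leqslant C\min(x,1-x)$ from~(\ref{8}); pairing $\sum_{k>N}k^{-2}\|f_{c,k}\|$ via Cauchy--Schwarz against $\|f_c\|_\sigma$ again produces a bound of the required order. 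For $D_4$, where the $k$-sum is over all $k\geqslant1$ but the $j$-sum is the tail $j>N$, I would instead keep $\|A^\sigma f_{c,k}\|$ and use $\|A^{-\sigma}[(2k\pi)^2I+A]^{-1}\|\leqslant C$ uniformly in $k$ (so that $[(2k\pi)^2I+A]^{-1} = A^{-\sigma}\cdot$bounded$\cdot A^\sigma$ applied to $f_{c,k}$), together with the Cayley tail $\sum_{j>N}j^{-(1-\varepsilon_1)/2}\|[(I+A)^{-1}A]^j A^\sigma f_{c,k}\|$; running the Lemma~\ref{lem5} contour argument on the $j$-sum with exponent $\sigma$ gives the decay $N^{-(\sigma-\varepsilon)/2}$, and summing over $k$ brings in precisely $\|f_c\|_{A^\sigma}=\sum_k\|A^\sigma f_{c,k}\|$, explaining why that third hypothesis is needed.

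Collecting the five estimates, the slowest-decaying contribution is $N^{-(\sigma-\varepsilon)/2}$ (coming from $D_2$ and $D_4$), while $D_1,D_3,D_5$ decay at least as fast; taking $C$ to be the maximum of the five constants and relabelling the small parameters $\varepsilon_1,\varepsilon_2$ into a single $\varepsilon>0$ yields~(\ref{26}). The main obstacle I anticipate is the bookkeeping in $D_4$: one must commute $A^\sigma$ past both the resolvent $[(2k\pi)^2I+A]^{-1}$ and the Cayley transform $[(I+A)^{-1}A]^j$ (all of which commute with $A$, so this is legitimate on $D(A^\sigma)$), verify the uniform-in-$k$ bound on $\|A^{-\sigma}[(2k\pi)^2I+A]^{-1}\|$ via the functional calculus / contour integral, and confirm that the resulting $k$-series converges under the hypothesis $\|f_c\|_{A^\sigma}<\infty$ rather than under a weighted $\ell^2$ condition — this is the one place where the three separate smoothness assumptions on $f_c$ are genuinely used in combination.
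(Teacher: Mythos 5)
Your overall architecture (the five-term decomposition~(\ref{25}) with $M=N$, Lemma~\ref{lem4} for the weight, Cauchy--Schwarz for the Fourier tails, and a contour-integral/Lemma~\ref{lem2} treatment of the Cayley powers) is the same as the paper's, and your handling of $D_1$ and $D_3$ is fine --- the direct resolvent bound $\|[(2k\pi)^2I+A]^{-1}\|\leqslant L/(1+(2k\pi)^2)$ is in fact slightly cleaner than the paper's contour computation there. The gap is in $D_2$ and $D_4$, precisely the terms that must deliver the borderline rate $N^{-(\sigma-\varepsilon)/2}$. For $D_2$ you bound the $j$-th term by $Cj^{-(1-\varepsilon_1)/2}\cdot j^{-(\sigma-\varepsilon_2)/2}\|A^{\sigma}f_0\|$ and then sum the tail over $j>N$; that tail is of order $N^{1-(1+\sigma-\varepsilon_1-\varepsilon_2)/2}=N^{-(\sigma-1-\varepsilon_1-\varepsilon_2)/2}$, which falls short of~(\ref{26}) by a full factor $N^{1/2}$. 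The phrase ``$\varepsilon=\varepsilon_1+\varepsilon_2+1$-type slack absorbed by relabelling'' does not repair this: $\varepsilon$ in~(\ref{26}) is arbitrarily small, so a $+1$ in the exponent cannot be relabelled away. The same loss occurs in your $D_4$: once you replace $[(2k\pi)^2I+A]^{-1}$ by the $k$-uniformly bounded operator $A^{-\sigma}[(2k\pi)^2I+A]^{-1}$, the remaining Cayley factor acting on $f_{c,k}\in D(A^{\sigma})$ yields only $j^{-(\sigma-\varepsilon_2)/2}$ per term, and the $j$-tail again gives $N^{-(\sigma-1-\varepsilon)/2}$.

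The missing idea is that in both terms an extra full negative power of $z$ is available inside the contour integral and must be kept. In $D_2$ the operator is $A^{-1}[(I+A)^{-1}A]^j=A^{-(1+\sigma)}[(I+A)^{-1}A]^jA^{\sigma}$, so the integrand carries $z^{-(1+\sigma)}$ and Lemma~\ref{lem2} is applied with $\alpha=(1+\sigma-\varepsilon_3)/2$, giving per-term decay $j^{-(1+\sigma-\varepsilon_3)/2}$; combined with the weight factor $j^{-(1-\varepsilon_1)/2}$ the tail then sums to $O(N^{-(\sigma-\varepsilon_1-\varepsilon_3)/2})$, as in~(\ref{28}). In $D_4$ the resolvent factor is kept inside the integral and estimated via $|(2k\pi)^2+z|\geqslant\rho$, which supplies the same extra $\rho^{-1}$ and again boosts the effective order from $\sigma$ to $1+\sigma$, as in~(\ref{30}); this is also exactly why $\|f_c\|_{A^{\sigma}}=\sum_k\|A^{\sigma}f_{c,k}\|$ is the right hypothesis, as you correctly sensed. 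A secondary point: in $D_5$ the bound $\|[(I+A)^{-1}A]^j\|\leqslant L+1$ does not follow from~(\ref{3}) alone (power-boundedness of the Cayley transform is not immediate); the paper avoids needing it by estimating the composition $[(2k\pi)^2I+A]^{-1}[(I+A)^{-1}A]^j$ in a single contour integral, where $|z/(1+z)|\leqslant 1$ on $\Gamma$ and the resolvent factor makes the integral absolutely convergent, uniformly in $j$.
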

\begin{proof}
We focus now on estimating each summand in~(\ref{25}).
Throughout the whole proof, we use the integration path~(\ref{11}).
We will also need the relations
\begin{equation}\label{rel}
\begin{split}
\big|(2k\pi)^2+z\big| &=
\big|(2k\pi)^2+\rho e^{\pm i\varphi}\big|
=\sqrt{(2k\pi)^4+2(2k\pi)^2\rho\cos\varphi+\rho^2}\\
&\geqslant
\sqrt{(2k\pi)^4+\rho^2} \geqslant \sqrt{2(2k\pi)^2\rho}
\geqslant 2k\pi\sqrt{2\rho},\\
\big|(2k\pi)^2+z\big| &=
\big|(2k\pi)^2+\rho e^{\pm i\varphi}\big|
=\sqrt{(2k\pi)^4+2(2k\pi)^2\rho\cos\varphi+\rho^2}\geqslant \rho,\\
\left | \frac{z}{1+z}\right |^2
&=\left| \frac{\rho e^{\pm i\varphi}}{1+\rho e^{i\varphi}}\right|^2
= \frac{\rho^2}{1+2\rho\cos\varphi +\rho^2}\leqslant \frac{\rho^2}{1+\rho^2}
\end{split}
\end{equation}
and the Cauchy\textendash Bunyakovsky\textendash Schwarz inequality for number series.

Then for $D_1$ we get
\begin{equation}\label{27}
\begin{split}
\left \| \frac{D_1}{\min(x,1-x)} \right\| &=
\left\|  \sum_{k=N+1}^{\infty}
\frac{\sqrt{2}\sin(2k\pi x)}{\min(x,1-x)}
\big[(2k\pi)^2I+A\big]^{-1}f_{s,k} \right\|\\
&\leqslant \sum_{k=N+1}^{\infty}
\frac{\sqrt{2}|\sin(2k\pi x)|}{\min(x,1-x)}
\left\| \frac{1}{2\pi i}\int\limits_{\Gamma}\frac{1}{(2k\pi)^2+z}(zI-A)^{-1}f_{s,k}dz \right\|\\
&\leqslant
\sum_{k=N+1}^{\infty}
\frac{\sqrt{2}2k\pi }{2\pi}
\int\limits_{\Gamma}\frac{1}{|(2k\pi)^2+z|}
\frac{L}{1+|z|}|dz|\,\|f_{s,k}\|\\
&\leqslant
\frac {L}{\pi}\sum_{k=N+1}^{\infty}
\int\limits_0^{+\infty}\frac{d\rho}{\sqrt{\rho}(1+\rho)}\|f_{s,k}\|
=L\sum_{k=N+1}^{\infty}
\frac{1}{k^{(\sigma+1)/2}} k^{(\sigma+1)/2}\|f_{s,k}\|\\
&\leqslant L \left\{\sum_{k=N+1}^{\infty}\frac 1{k^{\sigma +1}}\right\}^{1/2}
\left\{\sum_{k=1}^{\infty}k^{\sigma +1}\|f_{s,k}\|^2\right\}^{1/2}\\
&\leqslant L\left\{\int\limits_{N}^{+\infty}\frac{dx}{x^{\sigma +1}}\right\}^{1/2}\|f_s\|_{\sigma}
=\frac{L}{\sqrt{\sigma}}\frac{1}{N^{\sigma/2}}\|f_s\|_{\sigma}\quad (N\in\mathbb N,\,\,\sigma >0).
\end{split}
\end{equation}

To estimate $D_2$, we  assume that
$0<\varepsilon_1<1$,\,
$0<\varepsilon_3 <1$,\,
$\varepsilon_1+\varepsilon_3<\sigma$,\,
$M>\sigma-\varepsilon_3$.
These conditions make it possible to apply lemma~\ref{lem4} and lemma~\ref{lem2}
for $n=j/2$ and $\alpha=(1+\sigma-\varepsilon_3)/2$.
We have
\begin{equation}\label{28}
\left\| \frac{D_2}{\min(x,1-x)}\right\|
=\left\|-\sum_{j=M+1}^{\infty}\frac{v_j(1-x)+v_j(x)}{\min(x,1-x)}
A^{-1}\big[(I+A)^{-1}A\big]^j f_0\right\|
\end{equation}
\begin{equation*}
\begin{split}
&\leqslant
\sum_{j=M+1}^{\infty}\frac{|v_j(1-x)|+|v_j(x)|}{\min(x,1-x)}
\left\|\frac{1}{2\pi i}\int\limits_{\Gamma}
\left(\frac{z}{1+z}\right)^jz^{-(1+\sigma)}(zI-A)^{-1}A^{\sigma}f_0\,dz \right\|\\
&\leqslant
\sum_{j=M+1}^{\infty}
 \frac{2C_1}{2\pi j^{(1-\varepsilon_1)/2}}
 \int\limits_{\Gamma}\left|\frac{z}{1+z}\right|^j |z|^{-(1+\sigma)}\frac{L}{1+|z|} \,|dz|\|A^{\sigma}f_0\|\\
 &\leqslant \frac{2C_1 L}{\pi}
 \sum_{j=M+1}^{\infty}\frac{1}{j^{(1-\varepsilon_1)/2}}
 \int\limits_0^{+\infty}\left(\frac{\rho^2}{1+\rho^2}\right)^{j/2}
 (\rho^2)^{-(1+\sigma-\varepsilon_3)/2}\frac{d\rho}{\rho^{\varepsilon_3}(1+\rho)}
 \|A^{\sigma}f_0\|\\
 &\leqslant \frac{2C_1 L}{\pi}
 \sum_{j=M+1}^{\infty} \frac{1}{j^{(1-\varepsilon_1)/2}}
 \sup_{t>0}\left[\left(\frac{t}{1+t}\right)^{j/2} t^{-(1+\sigma-\varepsilon_3)/2}\right]
 \int\limits_0^{+\infty}\frac{d\rho}{\rho^{\varepsilon_3}(1+\rho)}\|A^{\sigma}f_0\|\\
&\leqslant \frac{2C_1 L}{\sin(\pi\varepsilon_3)}
(1+\sigma-\varepsilon_3)^{(1+\sigma-\varepsilon_3)/2}
\sum\limits_{j=M+1}^{\infty}
\frac{1}{j^{(1+\sigma-\varepsilon_1-\varepsilon_3)/2}}\|A^{\sigma}f_0\|\\
&\leqslant
\frac{2C_1 L}{\sin(\pi\varepsilon_3)}
(1+\sigma-\varepsilon_3)^{(1+\sigma-\varepsilon_3)/2}
\int\limits_{M}^{+\infty}\frac{dx}{x^{(2+\sigma-\varepsilon_1-\varepsilon_3)/2}}
\|A^{\sigma}f_0\|\\
&=\frac{C_3}{M^{(\sigma-\varepsilon_1-\varepsilon_3)/2}}
\|A^{\sigma}f_0\|
\end{split}
\end{equation*}
with $C_3 =\frac{4C_1 L}{\sin(\pi\varepsilon_3)(\sigma-\varepsilon_1-\varepsilon_3)}
(1+\sigma-\varepsilon_3)^{(1+\sigma-\varepsilon_3)/2}$.

Next, since $D_3$ and $D_1$ are very much alike, we obtain
\begin{equation}\label{29}
\begin{split}
\left\| \frac{D_3}{\min(x,1-x)}\right\|&=
\left\|
\sum_{k=N+1}^{\infty}\frac{\sqrt{2}\big[\cos(2k\pi x)-1\big]}{\min(x,1-x)}
\big[(2k\pi )^2 I+A\big]^{-1}f_{c,k}
\right\|\\
&\leqslant \frac{L}{\sqrt{\sigma}}\frac{1}{N^{\sigma/2}}\|f_c\|_{\sigma}
\quad (N\in\mathbb N,\,\,\sigma >0).
\end{split}
\end{equation}

The analysis of $D_4$ is similar to that of $D_2$.
We assume once again that $0<\varepsilon_1<1$,\,
$0<\varepsilon_3 <1$,\,
$\varepsilon_1+\varepsilon_3<\sigma$,\,
$M>\sigma-\varepsilon_3$
and therefore apply lemma~\ref{lem4} and lemma~\ref{lem2}
for $n=j/2$ and $\alpha=(1+\sigma-\varepsilon_3)/2$.
Thus we have
\begin{equation}\label{30}
\left\| \frac{D_4}{\min(x,1-x)}\right\|
=\left\|
-\sum_{k=1}^{\infty}\sqrt{2}\big[(2k\pi)^2I+A\big]^{-1}
\sum_{j=M+1}^{\infty}
\frac{v_j(1-x)+v_j(x)}{\min(x,1-x)}
\big[(I+A)^{-1}A\big]^jf_{c,k}\right\|
\end{equation}
\begin{equation*}
\begin{split}
&\leqslant
\sum_{k=1}^{\infty}\sum_{j=M+1}^{\infty}
\frac{|v_j(1-x)|+ |v_j(x)|}{\min(x,1-x)}
\left\|\frac{\sqrt{2}}{2\pi i}\int\limits_{\Gamma}
\left(\frac{z}{1+z}\right)^j
\frac{z^{-\sigma}}{(2k\pi)^2+z}(zI-A)^{-1}A^{\sigma}f_{c,k}\,dz
\right\|
\end{split}
\end{equation*}
\begin{equation*}
\begin{split}
&\leqslant
\sum_{k=1}^{\infty}\sum_{j=M+1}^{\infty}
\frac{\sqrt{2}\,2C_1}{2\pi j^{(1-\varepsilon_1)/2}}
\int\limits_{\Gamma} \left|\frac{z}{1+z}\right|^j
\frac{|z|^{-\sigma}}{|(2k\pi)^2+z|}
\frac{L}{1+|z|}|dz|\,\|A^{\sigma} f_{c,k}\|\\
&\leqslant\frac{2\sqrt{2}C_1 L}{\pi}
\sum_{k=1}^{\infty}\sum_{j=M+1}^{\infty}
\frac{1}{j^{(1-\varepsilon_1)/2}}
\int\limits_0^{+\infty}
\left(\frac{\rho^2}{1+\rho^2}\right)^{j/2}
(\rho^2)^{-(1+\sigma-\varepsilon_3)/2}\frac{d\rho}{\rho^{\varepsilon_3}(1+\rho)}
\|A^{\sigma}f_{c,k}\|\\
&\leqslant\frac{2\sqrt{2}C_1 L}{\pi}
\sum_{k=1}^{\infty}\sum_{j=M+1}^{\infty}
\frac{1}{j^{(1-\varepsilon_1)/2}}
 \sup_{t>0}\left[\left(\frac{t}{1+t}\right)^{j/2} t^{-(1+\sigma-\varepsilon_3)/2}\right]
 \int\limits_0^{+\infty}\frac{d\rho}{\rho^{\varepsilon_3}(1+\rho)}\|A^{\sigma}f_{c,k}\|\\
  &\leqslant \frac{2\sqrt{2}C_1 L}{\sin(\pi\varepsilon_3)}
(1+\sigma-\varepsilon_3)^{(1+\sigma-\varepsilon_3)/2}
\sum\limits_{j=M+1}^{\infty}
\frac{1}{j^{(1+\sigma-\varepsilon_1-\varepsilon_3)/2}}
\sum_{k=1}^{\infty}\|A^{\sigma}f_{c,k}\|\\
&\leqslant
\frac{2\sqrt {2} C_1 L}{\sin(\pi\varepsilon_3)}
(1+\sigma-\varepsilon_3)^{(1+\sigma-\varepsilon_3)/2}
\int\limits_{M}^{+\infty}\frac{dx}{x^{(2+\sigma-\varepsilon_1-\varepsilon_3)/2}}
\|A^{\sigma}f_{c,k}\|
=\frac{\sqrt{2} C_3}{M^{(\sigma-\varepsilon_1-\varepsilon_3)/2}}
\|A^{\sigma}f_{c,k}\|
\end{split}
\end{equation*}
with the constant $C_3$ defined in~(\ref{28}).

And finally, we evaluate the summand $D_5$:
\begin{equation}\label{31}
\left\| \frac{D_5}{\min(x,1-x)}\right\|
=\left\|
-\sum_{k=N+1}^{\infty}\sqrt{2}\big[(2k\pi)^2I+A\big]^{-1}
\sum_{j=1}^{M}
\frac{v_j(1-x)+v_j(x)}{\min(x,1-x)}
\big[(I+A)^{-1}A\big]^jf_{c,k}\right\|
\end{equation}
\begin{equation*}
\begin{split}
&\leqslant
\sum_{k=N+1}^{\infty}\sum_{j=1}^{M}
\frac{|v_j(1-x)|+ |v_j(x)|}{\min(x,1-x)}
\left\|\frac{\sqrt{2}}{2\pi i}\int\limits_{\Gamma}
\left(\frac{z}{1+z}\right)^j
\frac{1}{(2k\pi)^2+z}(zI-A)^{-1}f_{c,k}\,dz
\right\|\\
&\leqslant
\sum_{k=N+1}^{\infty}\sum_{j=1}^{M}
\frac{2\sqrt{2}}{3\cdot2\pi}
\int\limits_{\Gamma} \left|\frac{z}{1+z}\right|^j
\frac{1}{|(2k\pi)^2+z|}
\frac{L}{1+|z|}|dz|\,\| f_{c,k}\|\\
&\leqslant \frac{L}{3\pi^2}
\sum_{k=N+1}^{\infty}\frac{1}{k}\|f_{c,k}\|  \sum_{j=1}^{M}
\int\limits_{0}^{+\infty}\frac{d\rho}{\sqrt{\rho}(1+\rho)}
=\frac{LM}{3\pi}\sum_{k=N+1}^{\infty}\frac{1}{k}\|f_{c,k}\| \\
&=\frac{LM}{3\pi}\sum_{k=N+1}^{\infty}\frac{1}{k^{(3+\sigma)/2}}
k^{(1+\sigma)/2}\|f_{c,k}\|
=\frac{LM}{3\pi}
\left\{\sum_{k=N+1}^{\infty}\frac{1}{k^{3+\sigma}}\right\}^{1/2}
\left\{\sum_{k=N+1}^{\infty}k^{1+\sigma}\|f_{c,k}\|^2\right\}^{1/2}\\
&\leqslant \frac{LM}{3\pi}
\left\{\int\limits_{N}^{+\infty} \frac{dx}{x^{3+\sigma}}\right\}^{1/2}\|f_c\|_{\sigma}
=\frac{L}{3\pi\sqrt{2+\sigma}}  \frac{M}{N^{1+\sigma/2}}\|f_c\|_{\sigma}
\quad (N\in\mathbb N,\,\,\sigma>-2).
\end{split}
\end{equation*}
Estimate~(\ref{26}) easily follows from inequalities (\ref{27})--(\ref{31})
for $M=N$.
\end{proof}

In the next theorem, we investigate error~(\ref{25})
under another set of assumptions concerning smoothness of $f(x)$.
\begin{theorem}\label{th4}
Let the following conditions hold true:
\begin{gather*}
\quad \nu=\frac{\cos\varphi}{L+1}, \quad f_0\in C(A,(1),\nu),\quad
f_{c,k}\in {C(A,(1),\nu)}\quad \forall k\in\mathbb N,\\
\|f_{c}\|_{A^{\infty}}\overset{\text{def}}{=}
\left\{\sum\limits_{k=1}^{\infty}\|f_{c,k}\|_{C(A,\,(1),\,\nu)}^{2}\right\}^{1/2}<\infty,\\
\|f_s\|_{\infty}\overset{\text{def}}{=}\sum\limits_{k=1}^{\infty}e^k\|f_{s,k}\|<\infty, \quad
\|f_c\|_{\infty}\overset{\text{def}}{=}\sum\limits_{k=1}^{\infty}e^k\|f_{c,k}\|<\infty.
\end{gather*}
Then the accuracy of the approximate solution~(\ref{23})
is characterized by the weighted estimate
\begin{equation}\label{32}
\begin{split}
\left\| \frac{u(x)-u_{N,N}(x)}{\min(x,1-x)}\right\|
&\leqslant \frac{C e^{-\sqrt{N+1}}}{{(N+1)}^{1/2-\varepsilon}}
\left( \|f_s\|_{\infty}+\|f_c\|_{\infty}+ \|f_0\|_{C(A,(1),\nu)} +\|f_c\|_{A^{\infty}}\right),\\
& x\in[0,\,1] \quad (N\in\mathbb N),
\end{split}
\end{equation}
where $\varepsilon >0$ is an arbitrary small  number and
$C>0$ is a constant independent of  $N$.
\end{theorem}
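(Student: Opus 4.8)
The plan is to carry over, with $M=N$, the five-term splitting $u(x)-u_{N,N}(x)=\sum_{k=1}^{5}D_{k}$ of~(\ref{24})--(\ref{25}) used in the proof of Theorem~\ref{th3}, and to bound the weighted norm of each $D_{k}$ separately; the only change is that wherever that proof produced an algebraic tail $\sum_{k>N}$ or $\sum_{j>N}$ there is now, thanks to Lemma~\ref{lem6} and to the geometric decay of the Fourier coefficients, an extra factor $e^{-k}$ or $e^{-2\sqrt{j}}$ available to be split off. Throughout I would keep the contour $\Gamma$ of~(\ref{11}) and the elementary bounds~(\ref{rel}), exactly as before, and use everywhere that $\|A^{-1}\|\le L$ and $\|[(2k\pi)^{2}I+A]^{-1}\|\le L/(1+(2k\pi)^{2})$ (the latter from~(\ref{3}) with $z=-(2k\pi)^{2}\in\Sigma$). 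For the hypotheses of the present theorem the contour representations with $z^{-(1+\sigma)}$ that were needed in Theorem~\ref{th3} can be replaced by this cruder but cleaner route combined with Lemma~\ref{lem6}.

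For $D_{1}$ and $D_{3}$ I would reuse verbatim the computation of~(\ref{27}) (and its analogue for $D_{3}$, based on $|\cos(2k\pi x)-1|=2\sin^{2}(k\pi x)\le 2k\pi\min(x,1-x)$), stopping at the intermediate bounds $\|D_{1}/\min(x,1-x)\|\le L\sum_{k=N+1}^{\infty}\|f_{s,k}\|$ and $\|D_{3}/\min(x,1-x)\|\le L\sum_{k=N+1}^{\infty}\|f_{c,k}\|$. Writing $\|f_{s,k}\|=e^{-k}\,e^{k}\|f_{s,k}\|$ and using $e^{-k}\le e^{-(N+1)}$ for $k\ge N+1$ gives $\|D_{1}/\min(x,1-x)\|\le Le^{-(N+1)}\|f_{s}\|_{\infty}$ and likewise $\|D_{3}/\min(x,1-x)\|\le Le^{-(N+1)}\|f_{c}\|_{\infty}$; since $e^{-(N+1)}\le Ce^{-\sqrt{N+1}}(N+1)^{-1/2+\varepsilon}$ uniformly in $N\in\mathbb{N}$, both terms are of the claimed order (in fact far smaller).

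For $D_{2}$ I would combine estimate~(\ref{7}) of Lemma~\ref{lem4}, which gives $[\,|v_{j}(1-x)|+|v_{j}(x)|\,]/\min(x,1-x)\le 2C_{1}j^{-(1-\varepsilon_{1})/2}$ (apply it at $x$ and at $1-x$), with the bound $\|A^{-1}[(I+A)^{-1}A]^{j}f_{0}\|\le L\,\|[(I+A)^{-1}A]^{j}f_{0}\|\le \frac{L^{2}e}{\sqrt{2}}e^{-2\sqrt{j}}\frac{(\sqrt{j}+1)^{2}}{\sqrt{j}}\|f_{0}\|_{C(A,(1),\nu)}$ coming from $\|A^{-1}\|\le L$ and Lemma~\ref{lem6} (applicable since $f_{0}\in C(A,(1),\nu)\subset C^{\infty}(A)$). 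Splitting $e^{-2\sqrt{j}}=e^{-\sqrt{j}}e^{-\sqrt{j}}\le e^{-\sqrt{j}}e^{-\sqrt{N+1}}$ and $j^{-(1-\varepsilon_{1})/2}\le (N+1)^{-(1-\varepsilon_{1})/2}$ for $j\ge N+1$, the $j$-tail factors as $\frac{e^{-\sqrt{N+1}}}{(N+1)^{(1-\varepsilon_{1})/2}}\sum_{j\ge1}e^{-\sqrt{j}}\frac{(\sqrt{j}+1)^{2}}{\sqrt{j}}$, a convergent series (the constant $S$ from the proof of Theorem~\ref{th2}); with $\varepsilon_{1}=2\varepsilon$ this yields $\|D_{2}/\min(x,1-x)\|\le C(N+1)^{-1/2+\varepsilon}e^{-\sqrt{N+1}}\|f_{0}\|_{C(A,(1),\nu)}$. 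For $D_{4}$ I would run the same argument inside the $k$-sum, using additionally $\|[(2k\pi)^{2}I+A]^{-1}\|\le L/(1+(2k\pi)^{2})$, and then apply the Cauchy--Bunyakovsky--Schwarz inequality in $k$ together with the convergence of $\sum_{k}(1+(2k\pi)^{2})^{-2}$ and the hypothesis $\|f_{c}\|_{A^{\infty}}<\infty$, obtaining $\|D_{4}/\min(x,1-x)\|\le C(N+1)^{-1/2+\varepsilon}e^{-\sqrt{N+1}}\|f_{c}\|_{A^{\infty}}$.

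The remaining term $D_{5}$ --- where the inner $j$-sum runs only up to $M=N$ --- is the one point needing care, and I would handle it as in~(\ref{31}): the crude uniform bound~(\ref{8}) gives $[\,|v_{j}(1-x)|+|v_{j}(x)|\,]/\min(x,1-x)\le 2/3$, while $|z/(1+z)|\le1$ on $\Gamma$ and $\|[(2k\pi)^{2}I+A]^{-1}\|\le L/(1+(2k\pi)^{2})$ yield, after the contour estimate, $\|D_{5}/\min(x,1-x)\|\le CN\sum_{k=N+1}^{\infty}\frac1k\|f_{c,k}\|\le CN\sum_{k=N+1}^{\infty}\|f_{c,k}\|\le CNe^{-(N+1)}\|f_{c}\|_{\infty}$; the spurious factor $N$ is then absorbed because $Ne^{-(N+1)}=o\!\big(e^{-\sqrt{N+1}}(N+1)^{-1/2+\varepsilon}\big)$. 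Since all five bounds have the common form $C e^{-\sqrt{N+1}}(N+1)^{-1/2+\varepsilon}$ times one of $\|f_{s}\|_{\infty}$, $\|f_{c}\|_{\infty}$, $\|f_{0}\|_{C(A,(1),\nu)}$, $\|f_{c}\|_{A^{\infty}}$, adding them and renaming the constant gives~(\ref{32}). The only genuinely delicate step is thus $D_{5}$: the truncated inner sum forces use of the weak estimate~(\ref{8}) and leaves the extra factor $M=N$, so one must exploit the \emph{geometric} (not merely polynomial) decay of the $f_{c,k}$; everything else is a direct transcription of the proofs of Theorems~\ref{th2} and~\ref{th3}.
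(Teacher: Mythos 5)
Your proposal is correct and follows the paper's overall strategy: the same five-term splitting (\ref{24})--(\ref{25}) with $M=N$, the same treatment of $D_1$, $D_3$ (factor out $e^{-k}$, use $e^{-(N+1)}=o\big(e^{-\sqrt{N+1}}(N+1)^{-1/2+\varepsilon}\big)$) and of $D_5$ (accept the extra factor $M=N$ from estimate~(\ref{8}) and absorb $Ne^{-(N+1)}$ into the target rate). Where you genuinely diverge is in $D_2$ and $D_4$: the paper re-runs the Dunford--Cauchy contour argument with the insertion $\big(1+\tfrac{z}{j}\big)^{-j}\big(I+\tfrac{A}{j}\big)^{j}$, which for $D_2$ forces a modified contour $\widetilde{\Gamma}$ (two rays truncated at $|z|=\gamma$ plus an arc) because the extra factor $1/z$ makes $\int_0 d\rho/(\rho(1+\rho))$ divergent at the origin; you instead peel off the bounded operators $A^{-1}$ (norm $\leqslant L$) and $[(2k\pi)^2I+A]^{-1}$ (norm $\leqslant L/(1+(2k\pi)^2)$, legitimate since $-(2k\pi)^2\in\Sigma$) and apply Lemma~\ref{lem6} directly to $\big\|[(I+A)^{-1}A]^{j}f_0\big\|$ and $\big\|[(I+A)^{-1}A]^{j}f_{c,k}\big\|$, which is permissible because $f_0,f_{c,k}\in C(A,(1),\nu)\subset C^{\infty}(A)$. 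This shortcut is sound (the relevant operators are all functions of $A$ and submultiplicativity of the norm is all that is used), it reproduces the same $e^{-2\sqrt{j}}$ decay and the same Cauchy--Bunyakovsky--Schwarz step in $k$ for $D_4$, and it buys a cleaner argument at the cost of slightly larger constants (an extra factor $L$ and $\sum_k(1+(2k\pi)^2)^{-2}$ in place of the paper's $\ln\frac{1+\gamma}{\gamma}+\frac{\varphi}{1+\gamma}$ and $\sum_k k^{-2}$); the resulting rate in~(\ref{32}) is identical.
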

\begin{proof}
Except for the summand $D_2$, we use the integrations path~(\ref{11}).
For the sake of brevity, some obvious calculations
which are similar to those  in theorem~\ref{th3} and lemma~\ref{lem6} will be omitted.

Relying on~(\ref{27}), we evaluate $D_1$ as follows:
\begin{equation}\label{33}
\begin{split}
\left \| \frac{D_1}{\min(x,1-x)} \right\|
&= \left\|  \sum_{k=N+1}^{\infty}
\frac{\sqrt{2}\sin(2k\pi x)}{\min(x,1-x)}
\big[(2k\pi)^2I+A\big]^{-1}f_{s,k} \right\|\\
&\leqslant
L\sum_{k=N+1}^{\infty}\|f_{s,k}\|
=L\sum_{k=N+1}^{\infty}e^{-k}e^{k}\|f_{s,k}\|\\
&\leqslant
L e^{-(N+1)}\sum_{k=N+1}^{\infty}e^k \|f_{s,k}\|
\leqslant
\frac {L e^{-\sqrt{N+1}}}{\sqrt{N+1}}\|f_s\|_{\infty}.
\end{split}
\end{equation}

To estimate $D_2$, we take the integration path $\widetilde{\Gamma}$
consisting of two rays and a circle arc:
\begin{equation*}
\begin{split}
\widetilde{\Gamma}=\widetilde{\Gamma}_{-}\cup\widetilde{\Gamma}_{+}\cup\Gamma_{\gamma},\quad
\widetilde{\Gamma}_{\pm}&=\big\{ z\in \mathbb C\mid z=\rho e^{\pm i\varphi },\,\rho\in[\gamma,\,+\infty)\big\},\\
\Gamma_{\gamma}&= \{ z\in \mathbb C\mid z=\gamma e^{i\theta },\,\theta\in[-\varphi,\,+\varphi]\big\},
\end{split}
\end{equation*}
with $dz=d(\rho e^{\pm i \varphi}) = e^{\pm i\varphi}d\rho$ \,for $\widetilde{\Gamma}_{\pm}$\,
and \,$dz = d({\gamma e^{i\theta}})=i\gamma e^{i\theta}d\theta$ \,for $\Gamma_{\gamma}$.
Then
\begin{equation*}
\left\| \frac{D_2}{\min(x,1-x)}\right\|
=\left\|-\sum_{j=M+1}^{\infty}\frac{v_j(1-x)+v_j(x)}{\min(x,1-x)}
A^{-1}\big[(I+A)^{-1}A\big]^j f_0\right\|
\end{equation*}
\begin{equation*}
\begin{split}
&\leqslant
\sum_{j=M+1}^{\infty}\frac{|v_j(1-x)|+|v_j(x)|}{\min(x,1-x)}
\Bigg\|\frac{1}{2\pi i}\int\limits_{\widetilde{\Gamma}}
\frac 1 z \left(\frac{z}{1+z}\right)^j
\left(1+\frac z j\right)^{-j}(zI-A)^{-1}\left(I+\frac A j\right)^j f_0\,dz \Bigg\|\\
&\leqslant
\sum_{j=M+1}^{\infty}
 \frac{2C_1}{2\pi j^{(1-\varepsilon_1)/2}}
 \int\limits_{\widetilde{\Gamma}}
 \frac {1}{|z|}\left|\frac{z}{1+z}\right|^j \left | 1+\frac{z}{j} \right|^{-j}
  \frac{L}{1+|z|} \,|dz|\,\bigg\|\left(I+\frac A j \right)^j f_0\bigg\|\\
 &\leqslant \frac{2C_1 L}{\pi}
 \sum_{j=M+1}^{\infty}\frac{1}{j^{(1-\varepsilon_1)/2}}
 \left\{
 \int\limits_{\gamma}^{+\infty}
 \left [
 \frac{\rho\cos\varphi}
 {\sqrt{1+2\rho\cos\varphi+\rho^2}\sqrt{1+2\frac{\rho}{j}\cos\varphi +\frac{\rho^2}{j^2}}}
 \right ]^j
 \frac{d\rho}{\rho(1+\rho)}
 \right.\\
 &\quad +\int\limits_0^{\varphi}
\left.
 \left [
 \frac{\gamma\cos\theta}
 {\sqrt{1+2\gamma\cos\theta+\gamma^2}\sqrt{1+2\frac{\gamma}{j}\cos\theta +\frac{\gamma^2}{j^2}}}
 \right ]^j
 \frac{d\theta}{1+\gamma}
 \right\}   \cos^{-j}\varphi\,
 \bigg\|\left(I+\frac A j \right)^j A^{-j}A^{j} f_0\bigg\|\\
 &\leqslant \frac{2C_1 L}{\pi}
 \sum_{j=M+1}^{\infty}\frac{1}{j^{(1-\varepsilon_1)/2}}
 \left\{
 \int\limits_{\gamma}^{+\infty}
 \left [
 \frac{\rho\cos\varphi}
 {(1+\rho\cos\varphi)\big(1+\frac{\rho\cos\varphi}{j}\big)}
 \right ]^j
 \frac{d\rho}{\rho(1+\rho)}
 \right.\\
 &\quad +\int\limits_0^{\varphi}
\left. \left [
\frac{\gamma\cos\theta}
 {(1+\gamma\cos\theta)\big(1+\frac{\gamma\cos\theta}{j}\big)}
\right ]^j \frac{d\theta}{1+\gamma} \right\} \cos^{-j}\varphi\,
 \bigg\|\left(I+\frac A j \right)^j A^{-j}\bigg\| \|  A^{j} f_0\|\\
 &\leqslant \frac{2C_1 L}{\pi}
 \sum_{j=M+1}^{\infty}\frac{1}{j^{(1-\varepsilon_1)/2}}
\max_{t\geqslant 0}\left[\frac{t}{(1+t)\big(1+\frac t j\big)}\right]^j
 \left\{
  \int\limits_{\gamma}^{+\infty}
  \frac{d\rho}{\rho(1+\rho)}
  +\int\limits_0^{\varphi}
\frac{d\theta}{1+\gamma} \right\} \\
&\qquad\qquad\qquad \times \cos^{-j}\varphi\,
 (L+1)^j\|  A^{j} f_0\|.
\end{split}
\end{equation*}
Applying here lemma~\ref{lem3}, we obtain
\begin{equation}\label{34}
\begin{split}
\left\| \frac{D_2}{\min(x,1-x)}\right\|
&\leqslant \frac{2C_1 L}{\pi}
\left(\ln{\frac{1+\gamma}{\gamma}}+\frac{\varphi}{1+\gamma}\right)
 \sum_{j=M+1}^{\infty}\frac{ee^{-2\sqrt{j}}}{j^{(1-\varepsilon_1)/2}}
 \|f_0\|_{C(A,\,(1),\,\nu)}\\
&\leqslant
\frac{C_4e^{-\sqrt{M+1}}}{(M+1)^{(1-\varepsilon_1)/2}}
\|f_0\|_{C(A,\,(1),\,\nu)}
\end{split}
\end{equation}
with $C_4 =\frac{2C_1 L\widetilde{S}e}{\pi}
\left(\ln{\frac{1+\gamma}{\gamma}}+\frac{\varphi}{1+\gamma}\right)$,
where $\widetilde{S}$ is the sum of the convergent number series
$\widetilde{S}=\sum\limits_{j=1}^{\infty}
e^{-\sqrt{j}}  =1{.}670406818\ldots$.

The summand $D_3$ is estimated in the same way as  $D_1$:
\begin{equation}\label{35}
\begin{split}
\left\| \frac{D_3}{\min(x,1-x)}\right\|&=
\left\|
\sum_{k=N+1}^{\infty}\frac{\big[\sqrt{2}\cos(2k\pi x)-1\big]}{\min(x,1-x)}
\big[(2k\pi )^2 I+A\big]^{-1}f_{c,k}
\right\|\\
&\leqslant L\frac {e^{-\sqrt{N+1}}}{\sqrt{N+1}}\|f_c\|_{\infty}.
\end{split}
\end{equation}

Next, we have
\begin{equation}\label{36}
\left\| \frac{D_4}{\min(x,1-x)}\right\|
=\left\|
-\sum_{k=1}^{\infty}\sqrt{2}\big[(2k\pi)^2I+A\big]^{-1}
\sum_{j=M+1}^{\infty}
\frac{v_j(1-x)+v_j(x)}{\min(x,1-x)}
\big[(I+A)^{-1}A\big]^jf_{c,k}\right\|
\end{equation}
\begin{equation*}
\begin{split}
&\leqslant
\sum_{k=1}^{\infty}\sum_{j=M+1}^{\infty}
\frac{|v_j(1-x)|+ |v_j(x)|}{\min(x,1-x)}\\
&\qquad\times
\Bigg\|\frac{\sqrt{2}}{2\pi i}\int\limits_{\Gamma}
\frac 1 {(2k\pi)^2+z} \left(\frac{z}{1+z}\right)^j
\left(1+\frac z j\right)^{-j}(zI-A)^{-1}\left(I+\frac A j\right)^j f_{c,k}\,dz \Bigg\|\\
&\leqslant
\sum_{k=1}^{\infty}\sum_{j=M+1}^{\infty}
\frac{\sqrt{2}\,2C_1}{2\pi j^{(1-\varepsilon_1)/2}}
\int\limits_{\Gamma}
\frac{1}{|(2k\pi)^2+z|}
\left|\frac{z}{1+z}\right|^j
\left| 1+\frac{z}{j}\right|^{-j}
\frac{L}{1+|z|}|dz|\,
\bigg\|\left(I+\frac A j \right)^j f_{c,k}\bigg\|\\
&\leqslant\frac{2\sqrt{2}C_1 L}{\pi}
\sum_{k=1}^{\infty}\sum_{j=M+1}^{\infty}
\frac{1}{j^{(1-\varepsilon_1)/2}}
\int\limits_0^{+\infty}
\frac{1}{2k\pi\sqrt{2\rho}}
\left [
\frac{\rho\cos\varphi}
{(1+\rho\cos\varphi)\big(1+\frac{\rho\cos\varphi}{j}\big)}
  \right ]^j  \frac{d\rho}{1+\rho}\\
&\qquad \times
\cos^{-j}\varphi\,\bigg\|\left(I+\frac A j \right)^j
A^{-j}\bigg\| \|A^{j}f_{c,k}\|\\
&\leqslant\frac{C_1 L}{\pi^2}
\sum_{k=1}^{\infty}\sum_{j=M+1}^{\infty}
\frac{1}{kj^{(1-\varepsilon_1)/2}}
 \max_{t\geq 0}
 \left[\frac{t}{(1+t)\big(1+\frac{t}{j}\big)}\right]^j
 \int\limits_0^{+\infty} \frac{d\rho}{\sqrt{\rho}(1+\rho)}
 \cos^{-j}\varphi\,(L+1)^j\|A^j f_{c,k}\|  \\
&\leqslant \frac{C_1 L}{\pi}
\sum\limits_{j=M+1}^{\infty}
\frac{e e^{-2\sqrt{j}}}{j^{(1-\varepsilon_1)/2}}
\sum_{k=1}^{\infty}\frac{1}{k} \|f_{c,k}\|_{C(A,\,(1),\,\nu)}\\
&\leqslant
\frac{ C_1 L\widetilde{S}e}{\pi}
\frac{e^{-\sqrt{M+1}}}{(M+1)^{(1-\varepsilon_1)/2}}
\left\{\sum\limits_{k=1}^{\infty}\frac{1}{k^2}\right\}^{1/2}
\left\{\sum_{k=1}^{\infty} \|f_{c,k}\|^2_{C(A,\,(1),\,\nu)}\right\}^{1/2}\\
&\leqslant \frac{C_5 e^{-\sqrt{M+1}}}{(M+1)^{(1-\varepsilon)/2}}
\|f_c\|_{A^{\infty}}
\end{split}
\end{equation*}
with the constant $C_5=\frac{C_1 L\widetilde{S}e}{\sqrt{6}}$
and $\widetilde{S}$  described in~(\ref{34}).

At last, it remains to consider $D_5$. Taking into account~(\ref{31}), we get
\begin{equation}\label{37}
\begin{split}
\left\| \frac{D_5}{\min(x,1-x)}\right\|
&=\left\|
-\sum_{k=N+1}^{\infty}\sqrt{2}\big[(2k\pi)^2I+A\big]^{-1}
\sum_{j=1}^{M}
\frac{v_j(1-x)+v_j(x)}{\min(x,1-x)}
\big[(I+A)^{-1}A\big]^jf_{c,k}\right\|\\
&\leqslant \frac{LM}{3\pi}\sum_{k=N+1}^{\infty}
\frac{1}{k}\|f_{c,k}\|
= \frac{LM}{3\pi}\sum_{k=N+1}^{\infty}\frac {e^{-k}}{k}e^{k}\|f_{c,k}\|\\
&\leqslant \frac{LMe^{-(N+1)}}{3\pi(N+1)} \sum_{k=1}^{\infty}\|f_{s,k}\|
\leqslant \frac{C_6 e^{-\sqrt{N+1}}}{\sqrt{N+1}} \sum_{k=1}^{\infty}\|f_{s}\|_{\infty}
\end{split}
\end{equation}
with the constant $C_6=\frac{L}{3\pi}$.

Now estimates~(\ref{33})--(\ref{37}) with $M=N$ easily lead to the assertion of the theorem.
\end{proof}

\section{Conclusion}
To summarize, we make some general comments on the results proven above.

In theorems~\ref{th1} and \ref{th2} for the homogeneous equation
and in theorems~\ref{th3} and \ref{th4} for the inhomogeneous one,
the boundary effect is evaluated through the weight function $\min{(x,\,1-x)}$
which characterizes the distance from the boundary points of the interval $[0,1]$.

Both estimate~(\ref{15}) and estimate~(\ref{26}) are power-dependent
on the parameter $\sigma$.
The first one
indicates that if  $\sigma$ increases
(meaning that differential properties of $u_1$ improve),
the convergence rate of the approximate solution $u_N(x)$  automatically gets higher.
Therefore, method~(\ref{14})
is a method without saturation of accuracy.
The same is true of the second estimate.
Namely, when $\sigma$ increases
(implying that the Fourier coefficients decay more quickly, i.e. $f(x)$ has  better smoothness),
the convergence rate of the approximate solution $u_{N,N}(x)$ accordingly goes up.
So, method~(\ref{23})
does not have saturation of accuracy either.

Estimate~(\ref{16}) shows that method~(\ref{14}) 
has the exponential rate of convergence
provided that $u_1$ is a vector of the exponential type in the sense of~\cite{Radyno1985}.
In a similar way, estimate~(\ref{32}) indicates that method~(\ref{23}) 
has the exponential rate of convergence provided
vectors $f_0$, $f_{s,k}$, $f_{c,k}$, $k=1,2,\dots$,
in the Fourier representation of $f(x)$
have  appropriate exponential rates of decay.
Both of these methods 
can  be called
\textit{super-exponentially convergent}.

\bibliography{Makarov_Mayko}

\end{document}